\def\cP{\mathcal P}
\def\cX{\mathcal X}
\def\cZ{\mathcal Z}
\def\cC{\mathcal C}
\def\gG{\mathbb G}
\def\fF{\mathbb F}
\def\vV{\mathbb V}
\newtheorem{thm}{Theorem} 
\newtheorem*{thm*}{Theorem}
\newtheorem*{prop*}{Proposition}
\newtheorem{cor}[thm]{Corollary}
\newtheorem*{cor*}{Corollary}
\newtheorem{lem}[thm]{Lemma}
\newtheorem*{lem*}{Lemma}
\newtheorem*{claim*}{Claim}
\newtheorem{prop}[thm]{Proposition}
\theoremstyle{remark}
\newtheorem{rem}[thm]{Remark}
\newtheorem*{rem*}{Remark}
\newtheorem{crit-rem}[thm]{Critical remark}
\newtheorem{example}[thm]{Example}
\newtheorem*{example*}{Example}
\newtheorem*{defn*}{Definition}
\def\inv{^{-1}}
\DeclareMathOperator{\coker}{coker}
\def\e{\mathrm {e}}
\def\refp #1.{(\ref{#1})}
\newcommand\carets [1]{\langle #1 \rangle}
\newcommand{\A}{\mathcal{A}}
\newcommand{\kk}{\mathbf{k}}
\newcommand{\ul}[1]{\underline {#1}}
\def\sbr #1.{^{[#1]}}
\def\sfl #1.{^{\lfloor #1\rfloor}}
\def\inv{^{-1}}
\def\?{{\bf{??}}}
\def\A{\Bbb A}
\def\C{\mathbb C}
\def\P{\mathbb P}
\def\N{\mathbb N}
\def\Z{\mathbb Z}
\def\Spec{\text{\rm Spec} }
\def\O{\mathcal O}
\def\Sym{\textrm{Sym}}
\def\rk{\text{rk}}
\def\g{\mathfrak g}
\def\1/2{\frac{1}{2}}
\def\I{\mathcal{ I}}
\def\2{{[2]}}
\def\l{\ell}
\def\nl{\newline}
\def\<{\langle}
\def\>{\rangle}
\def\2{{[2]}}
\def\l{\ell}
\def\scl #1.{^{\lceil#1\rceil}}
\def\spr #1.{^{(#1)}}
\def\sbc #1.{^{\{#1\}}}
\def\subpr#1.{_{(#1)}}
\def\beq{\begin{equation*}}
\def\eeq{\end{equation*}}
\def\g3{{\Gamma\spr 3.}}
\newcommand{\eqspl}[2]{
\begin{equation}\label{#1}
\begin{split}
#2\end{split}\end{equation}}
\newcommand{\eqsp}[1]{\begin{equation*}
\begin{split}#1\end{split}\end{equation*}}
\newcommand{\exseq}[3]{
0\to #1\to #2\to #3\to 0
}
\newcommand{\beginalphaenum}{
\begin{enumerate}\renewcommand{\labelenumi}{ }
\item \begin{enumerate}
}
\def\eex{\end{rm}\end{example}}
\begin{document} 

\title{Interpolation of rational scrolls}
\author 
{Ziv Ran}
\thanks{arxiv.org/math.ag/ }
\date{\DTMnow}


\address {\nl UC Math Dept. \nl
	Skye Surge Facility, Aberdeen-Inverness Road
	\nl
	Riverside CA 92521 US\nl 
	ziv.ran @  ucr.edu\nl
	\url{https://profiles.ucr.edu/app/home/profile/zivran}
}

\subjclass[2010]{14n25, 14j45, 14m22}
\keywords{rational curve, Grassmannian, 
	scroll, vector bundle, smoothing, normal bundle, fan}

\begin{abstract}
	
	We show in many cases that there exist rational scrolls which are balanced,
	i.e. they contain the
	expected number of general linear spaces as rulings. For example, there exist balanced scrolls
	of degree $mk+1$ and fibre dimension $k$ in $\P^{2k+1}$ for all $m\geq 1$.
\end{abstract}
\maketitle
As is well known from classical projective geometry, 
3 general lines in $\P^3$ lie on a unique quadric surface. Extrapolating from there,
one can pose the following \emph{rational scroll interpolation problem}. 
Define a \emph{rational $(k,1)$-scroll} in $\P^n$ to be the union of
a family of linear $\P^k$s parametrized by a rational curve. 
Then given a general collection of $q$ linear $\P^k$s in $\P^n$, what is the smallest degree
of a rational  $(k,1)$-scroll containing all these as rulings? There is an obvious \emph{expected}
answer to this question, namely the smallest such degree $e$ is
\[ e=\lceil\frac{ (q-1)((k+1)(n-k)-1)+2}{n+1}\rceil.\]
This expectation is based on  the fact that a rational $(k,1)$-scroll of degree $e$ 
corresponds to a rational curve $C$
of degree $e$ in the Grassmannian $\gG=\gG(k,n)$ and if the scroll is to contain $q$ general rulings then
the normal bundle $N=N_{C/\gG}$ must satisfy 
\eqspl{euler}{\chi(N(-qP))\geq 0} where $P\in C.$
Then the question becomes whether the actual degree equals the expected degree.
The case where $k=0$, i.e. $\gG(k,n)=\P^n$ is treated in \cite{ran-normal}, 
while the analogous question for Fano hypesurfaces in $\P^n$ is treated in
\cite {coskun-riedl}, \cite{shen-normal}, \cite{chen-zhu},
\cite{hypersurf} and \cite{caudatenormal} .\par
Now a rational curve $C$ of degree $e$ in $\gG(k,n)$ is said to be \emph{balanced}
if 
\eqsp{&\chi(N(-qP))=h^0(N(-qP)),\\  &q:=[(e(n+1)-2)/((k+1)(n+1)-1)]=[\deg(N)/\rk(N)],}
i.e. if the scroll corresponding to 
 $C$ contains the maximum (=expected) number of general $\P^k$s as rulings.
The purpose of this paper is to show that for many $k, n$, $\gG(k,n)$
contains infinitely many families of balanced rational curves. For example, when $k=1$
or $n=2k+1$,
the set of degrees $e$ of balanced rational curves contains some arithmetic progressions.
See Theorems \ref{k=1}, \ref{any k}, \ref{mirror}, Corollary \ref{oddG} 
as well as Examples \ref{odd-sec}, \ref{4n_1},
\ref{4n_1+2} , \ref{r=0} and \ref{alphabeta-example} for more precise statements. \par
The proof is based on a 'balanced plus perfect' strategy. We use a 'deformation to the normal bundle' 
type construction
to degenerate a Grassmannian $\gG$ to a reducible variety $\fF_1\cup \fF_2$ where $\fF_1$
is the blowup of $\gG$ is a Schubert cycle $\Sigma$ and $\fF_2$ is essentially
the projectivization $\P(N_{\Sigma/\gG}\oplus \O)$. Then we construct genus-0 
nodal curves $C_1\cup C_2$
with $C_i\subset\fF_i$ balanced, $i=1,2$ and at least one of them perfect (having
normal bundle that is a twist of a trivial bundle). Such a curve deforms to a balanced rational curve in
$\gG$.\par
The Schubert cycles used here are rather special (e.g. sub-Grassmannians). Hopefully
more general Schubert cycles can lead to more balanced scrolls.

\section*{Conventions and preliminaries}
We work over an algebraically closed field $\kk$ and use the EGA convention for projective
and Grassmannian spaces and bundles. Thus 
\[\gG=\gG(k,n)=G(k+1, n+1)\] denotes the space
of $(k+1)$- dimensional quotients or codimension-$(k+1)$ subspaces 
 $V^{n-k}\subset\kk^{n+1}$ with its tautological rank-$(k+1)$ quotient
 bundle $Q$ and tautological rank-$(n-k)$ subbundle $S$.
 By duality \[\gG=G(n-k, n+1)\] so whenever convenient we may also consider $\gG$
 as parametrizing  
 $(k+1)$- dimensional  or codimension-$(n-k)$ subspaces 
 $V^{k+1}\subset\kk^{n+1}$
 Thus $\gG$  carries two
 sub- and two quotient bundles, to be denoted $S^{n-k}, S^{k+1}$,
 $Q^{k+1}, Q^{n-k}$ if needed to avoid ambiguity.
 These are related by $(S^{n-k})^*=Q^{n-k}, (S^{k+1})^*=Q^{k+1}$.\par
 \subsection{Balanced bundles} 
 A bundle $A$ on $\P^1$ is \emph{balanced} if $H^1(A\otimes A^*)=0$; if moreover
 $\rk(A)|\deg(A)$, so $A$ is a twist of a trivial bundle, $A$ is
 \emph{perfectly balanced}. The tensor product of a balanced bundle and a perfectly
 balanced one is balanced, and slopes add.
 A rational curve $C\to X$ is (perfectly)
 balanced if its normal bundle is. 
 The slope of $C$ is by definition that of its normal bundle.
 If $X$ is a Grassmannian $C$
 is said to be quot- or sub- balanced if the appropriate tautological bundle
 is balanced. This just means that the respective (mutually dual)  projective scrolls
 corresponding to $C$ are balanced, i.e. projectivizations of balanced bundles.\par
\subsection{Balanced extensions}
Given an exact sequence of vector bundles on $\P^1$
\[\exseq{E_1}{E}{E_2}\]
	with respective slopes $s_1, s, s_2$ such that $E_1$ and $E_2$
	are balanced and the matching condition
	$[s_1]=[s_2]$ holds, then $E$ is balanced and $[s]=[s_1]$
		(see \cite{caudatenormal}, Lemma 25).\par 
 If $X$ is fibred over $B$, the normal bundle $N_{C/X}$ fits in an exact sequence
 \[\exseq{N_v}{N_{C/X}}{N_{C/B}}\]
 	where $N_v=T_{X/B}|_C$ and $N_h=N_{C/B}$ are the vertical and horizontal 
 	normal bundles . The
 	respective slopes of $N_v$ and $N_h$ are called the vertical and horizontal slopes
 	of $C$ and denoted $s_v$ and $s_h$.
 	If $N_v$ and $N_h$ are balanced and their
 	respective slopes satisfy $[s_v]=[s_h]$, then $N_{X/X}$ is balanced.
 
 	\subsection{Balanced kernels}
 	We will also use the following result
 	which is essentially in  \cite{caudatenormal}, Lemma 26 (with trivial enahncements):
 	\begin{lem}\label{balancing-lem}
 		Let $E$ be a rank- $r$ bundle on $\P^1$ of the form
 			\eqspl{e}
 		{E=r_0\O(a)\oplus r_1\O(a-1)\oplus...\oplus r_b\O(a-b),\ \ r_0>0, b\geq 0,  r_1,...,r_b\geq0}
 		 Then for $p_1,...,p_s\in\P^1$ distinct,
 		  the kernel $E_s$ of a sufficiently general map to a skyscraper sheaf
 		$E\to\bigoplus\limits_{i=1}^{s}\kk_{p_i}$ is balanced (resp. perfectly balanced) if 
 		$s\geq \sum r_i(b-i)$
 		(resp. $s=\sum r_i(b-i)$).
 		In particular, if $E$ is balanced with upper rank $r^+$
 		then the kernel of a sufficiently general map $E\to\bigoplus\limits_{i=1}^{s}\kk_{p_i}$
 		is balanced for any $s$ and  perfectly balanced  if $s$ is a multiple of $r^+$.
 		\end{lem}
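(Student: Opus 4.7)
My plan is to prove the lemma by induction on $s$, reducing to successive single elementary transformations. Since the $p_i$ are distinct, the map $E \to \bigoplus_{i=1}^s \kk_{p_i}$ decomposes into independent components $\phi_i \colon E \to \kk_{p_i}$, and its kernel may equivalently be obtained as a sequence of elementary transformations $E_0 = E$, $E_i = \ker(E_{i-1} \to \kk_{p_i})$. Because ``sufficiently general'' is an open nonempty condition, for a generic $\phi$ each $\phi_i$ may be taken general with respect to the current bundle $E_{i-1}$, so it suffices to analyse a single general elementary transformation.

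The heart of the argument is the following claim: if $F = r_0 \O(a) \oplus r_1 \O(a-1) \oplus \cdots \oplus r_b \O(a-b)$ with $r_0 > 0$, then for a general covector $\psi \colon F \to \kk_p$ the kernel $F' = \ker \psi$ has splitting type
\[ F' = (r_0 - 1)\O(a) \oplus (r_1 + 1)\O(a-1) \oplus r_2\O(a-2) \oplus \cdots \oplus r_b\O(a-b), \]
i.e., the transformation drops one copy of the top summand by one. I would establish this by computing $h^0(F'(-k))$ for all integers $k$ from the twisted short exact sequence $0 \to F'(-k) \to F(-k) \to \kk_p \to 0$. For $k \leq a$ every nonzero summand of $F(-k)$ has nonnegative degree, so evaluation $H^0(F(-k)) \to F(-k)|_p$ is surjective and composing with a general covector produces a surjection onto $\kk$; hence $h^0(F'(-k)) = h^0(F(-k)) - 1$. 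For $k > a$ both cohomology groups vanish. Direct comparison with the claimed splitting shows the $h^0$ values agree, and since the function $k \mapsto h^0(F(-k))$ determines the splitting type of a bundle on $\P^1$, the claim follows.

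With the key claim in hand I would introduce the unbalance statistic $U(F) := \sum_i r_i(b-i)$ and verify, across the cases $r_0 \geq 2$ and $r_0 = 1$ (with attention to the position of the next nonzero $r_j$), that each general elementary transformation decreases $U$ by exactly $1$ as long as $U > 0$. Thus after $s = U(E) = \sum r_i(b-i)$ transformations $U$ reaches zero, meaning the splitting has a single degree, so $E_s = r\O(c)$ is perfectly balanced. For $s \geq U(E)$ the sequence enters a period-$r$ cycle: starting from $r\O(c)$, successive general transformations give $(r-t)\O(c) \oplus t\O(c-1)$ for $t = 1, \ldots, r-1$, then $r\O(c-1)$, and so on. Every bundle in this cycle has gap $\leq 1$, hence is balanced, and is perfectly balanced precisely when $t \equiv 0 \pmod r$. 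The balanced-$E$ case is then immediate: there $U(E) = r^+$, and the same analysis yields balanced kernels for every $s$, perfectly balanced whenever $s - r^+$ is a multiple of $r$.

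The main technical obstacle is the cohomological step in the key claim: verifying that a sufficiently general $\psi$ makes the composition $H^0(F(-k)) \to F(-k)|_p \to \kk$ surjective simultaneously for every relevant $k$. This is an open nonempty condition on $\psi \in \Hom(F|_p, \kk)$ for each fixed $k$, but the finitely many open conditions arising from the different twists must be intersected and shown to remain nonempty; this is where care is needed. Everything past that step reduces to bookkeeping with the unbalance statistic.
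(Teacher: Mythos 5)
Your proof is correct and is essentially the paper's own argument: the paper likewise introduces the unbalanced degree $u(E)=\sum r_i(b-i)=\deg(E)-\rk(E)\mu_{\min}(E)$ and reduces to the claim that a single general colength-one modification drops $u$ by exactly one unless $E$ is already perfectly balanced, which is precisely the content of your splitting-type claim and period-$r$ cycle. One small slip in your justification of the key claim: for $a-b<k\le a$ it is \emph{not} true that every summand of $F(-k)$ has nonnegative degree, so the evaluation $H^0(F(-k))\to F(-k)|_p$ need not be surjective --- but its image still contains the nonzero image of $H^0(r_0\O(a-k))$ (here $r_0>0$ is used), a general covector is nonzero on that image, and the finitely many resulting nonempty open conditions on $\psi$ automatically have nonempty intersection since the space of covectors is irreducible, so the conclusion $h^0(F'(-k))=h^0(F(-k))-1$ stands.
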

 	Let $r_{\min}(E), \mu_{\min}(E)$ denote respectively the rank and slope of the maximal rank, minimal
 	slope quotient of $E$ (i.e. in the above notation, $r_b, a-b$).
 	Then one way to prove the Lemma is to
 	 define the 'unbalanced degree' of $E$ as above as 
 	 \[u(E)=\sum r_i(b-i)=\deg(E)-\rk(E)\mu_{\min}(E)\]
 	Note $u(E)\geq \rk(E)-r_{\min}(E)$ with equality iff $E$ is balanced. 
 	Then, taking $s=1$, one can easily prove that, unless $E$ is perfectly balanced,
 	we have $u(E_1)=u(E)-1$. Since $u(E)\geq 0$, repeating
 	the modification $E\mapsto E_1$ eventually leads to a balanced and then perfectly balanced
 	bundle.
 	\begin{cor}\label{kernel}
 		Notations as above, set $\mu_{\min}(E)=a-b$, i.e. the smallest slope of a quotient of $E$.
 		Then a sufficiently general map $E\to\O(\l)$ has balanced kernel provided
 		\[\l\geq u(E)+\mu_{\min}(E)=\deg(E)-(\rk(E)-1)\mu_{\min}(E).\]
 		\end{cor}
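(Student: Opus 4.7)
Set $\mu := \mu_{\min}(E)$ and $s := u(E)$, so the hypothesis reads $\ell \geq \mu + s$. A section of $\O(s)$ with $s$ simple zeros at general points $p_1, \ldots, p_s \in \P^1$ realises an injection $\O(\ell - s) \hookrightarrow \O(\ell)$ (with $\ell - s \geq \mu$) whose cokernel is the length-$s$ skyscraper $T = \bigoplus_i \kk_{p_i}$.

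Given a general surjection $\phi : E \to \O(\ell)$, the composition $\bar\phi : E \to T$ is general enough to invoke Lemma \ref{balancing-lem} with \emph{exactly} $s = u(E)$ points: the kernel $\bar K := \ker\bar\phi$ is \emph{perfectly} balanced, so $\bar K \cong \rk(E)\cdot\O(\mu)$. Since $\phi(\bar K) \subseteq \ker(\O(\ell) \twoheadrightarrow T) = \O(\ell - s)$, the map $\phi$ restricts to a surjection $\psi : \rk(E)\cdot\O(\mu) \to \O(\ell - s)$ with $\ker\psi = \ker\phi = K$. Setting $n := \ell - s - \mu \geq 0$, the map $\psi$ is determined by $\rk(E)$ generic sections of $\O(n)$, and its kernel is the standard syzygy bundle on $\P^1$: it is well known to be balanced, with slopes in the set $\{\mu - \lceil n/(\rk(E) - 1) \rceil,\ \mu - \lfloor n/(\rk(E) - 1) \rfloor\}$. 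Hence $K$ is balanced.

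The main technical point is the \emph{genericity transfer}: that a general $\phi \in \Hom(E, \O(\ell))$ simultaneously induces a $\bar\phi$ general enough for Lemma \ref{balancing-lem} and a restricted map $\psi$ general enough for the syzygy calculation. The cleanest way around this is to invoke openness of ``$K$ balanced'' in the irreducible space $\Hom(E, \O(\ell))$, so that it suffices to exhibit one $\phi$ with balanced kernel. Such an example may be built by first choosing a generic pair $(\bar\phi, \psi)$ (with $\bar K$ perfectly balanced by the Lemma and $K$ balanced by the syzygy calculation) and then assembling a lift $\phi$ using the $\Ext^1$-analysis of $0 \to \O(\ell - s) \to \O(\ell) \to T \to 0$; the relevant obstructions are controlled by the slope hypothesis $\ell \geq \mu + s$, so the construction goes through.
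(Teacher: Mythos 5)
Your decomposition is genuinely different from the paper's, and unfortunately the difference is exactly where the argument breaks. You split the \emph{target}, writing $\O(\ell-s)\subset\O(\ell)$ with $s=u(E)$ and passing to $\bar\phi\colon E\to T$; the paper instead splits the \emph{source}: it picks a top summand $\O(a)\subset E$, maps it generally (hence injectively) into $\O(\ell)$, and identifies $\ker\phi$ with the kernel of the induced map $E/\O(a)\to\O(\ell)/\O(a)\cong\bigoplus_{i=1}^{\ell-a}\kk_{p_i}$. The point of that choice is that the restriction map $\Hom(E/\O(a),\O(\ell))\to\Hom(E/\O(a),\bigoplus\kk_{p_i})$ is surjective, since each summand contributes an evaluation map $H^0(\O(\ell-a+i))\to\kk^{\ell-a}$ which is onto because $\ell-a+i\geq(\ell-a)-1$; hence a general $\phi$ really does induce a general map to the skyscraper and Lemma \ref{balancing-lem} applies with no further work. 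In your setup the analogous surjectivity fails, and your proposed repair is not correct: the obstruction to lifting $\bar\phi$ to $\phi$ lives in $\Ext^1(E,\O(\ell-s))=H^1(E^*(\ell-u(E)))$, and this group is \emph{not} killed by the hypothesis $\ell\geq u(E)+\mu_{\min}(E)$.

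Concretely, take $E=\O(2)\oplus\O$, so $a=2$, $b=2$, $u(E)=2$, $\mu_{\min}(E)=0$, and take $\ell=2$, the smallest value the Corollary allows. Then $s=2$, $\O(\ell-s)=\O$, and $\Ext^1(E,\O)=H^1(\O(-2))\oplus H^1(\O)=\kk$, while $\Ext^1(E,\O(2))=0$; the exact sequence $\Hom(E,\O(2))\to\Hom(E,T)\to\Ext^1(E,\O)\to 0$ shows that the image of $\Hom(E,\O(2))$ in $\Hom(E,T)\cong\kk^4$ is a hyperplane. So the \emph{generic} $\bar\phi$ admits no lift at all: you cannot first choose a generic pair $(\bar\phi,\psi)$ and then assemble $\phi$. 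What you would actually need is that the generic \emph{liftable} $\bar\phi$ still lies in the open locus where Lemma \ref{balancing-lem} yields a perfectly balanced kernel, and that the induced $\psi$ on that kernel is in turn general enough for the syzygy computation --- but that is precisely the genericity-transfer statement you set out to establish, so the argument is circular at its key step. (The Corollary itself holds in this example: a general $(c,q)\colon\O(2)\oplus\O\to\O(2)$ has kernel $\O$.) The clean fix is to adopt the paper's source-side decomposition, for which the needed genericity is automatic.
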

 	\begin{proof} Pick a summand $\O(a)\subset E$ and a general map $\O(a)\to\O(\l)$. Then
 		applying Lemma \ref{balancing-lem} to a general map $E/\O(a)\to\O(\l)/O(a)$ 
 		yields that the kernel is balanced provided
 		\[\l-a\geq u(E/\O(a))=u(E)-b,\]
 		i.e. $\l\geq u(E)+a-b=u(E)+\mu_{\min}(E)$.
 		\end{proof}
 	\begin{lem}\label{to-torsion}
 		Let $E$ be a bundle on $\P^1$ and $\tau$ a torsion sheaf of
 		the form $\bigoplus\limits_{i=1}^m \kk_{p_i}$ with $p_i\in\P^1$ distinct. Then the kernel
 		$K$  of
 		a general map $E\to \tau$ is either balanced or $u(K)=u(E)-m$.
 		\end{lem}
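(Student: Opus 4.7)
I would prove this by induction on $m$, with base $m = 1$ supplied by the statement recalled just before Lemma \ref{balancing-lem}: either $E$ is perfectly balanced, in which case $E_1$ is still balanced, or $u(E_1) = u(E) - 1$.

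To set up the induction, I would write a general map $\phi : E \to \tau$ as a tuple $(\phi_1, \dots, \phi_m)$ of general maps $\phi_j : E \to \kk_{p_j}$, and form the descending chain $K_0 = E$, $K_j = \ker(K_{j-1} \to \kk_{p_j})$, so that $K_m = K$. Because the $p_i$ are pairwise distinct, $K_j$ coincides with $E$ on a neighborhood of each $p_i$ for $i > j$; hence the restriction $E^*|_{p_i} \to K_j^*|_{p_i}$ is an isomorphism, the tail map $K_j \to \bigoplus_{i > j} \kk_{p_i}$ is still general, and the base case applies at each step.

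The inductive step then splits along the inductive dichotomy. If $K_{m-1}$ is not balanced, it is in particular not perfectly balanced, and the base case yields $u(K_m) = u(K_{m-1}) - 1$; combined with $u(K_{m-1}) = u(E) - (m-1)$ this gives $u(K_m) = u(E) - m$. If instead $K_{m-1}$ is balanced, write $K_{m-1} = r_0 \O(a) \oplus r_1 \O(a - 1)$ (allowing $r_1 = 0$ in the perfectly-balanced sub-case); a direct computation then shows that the general kernel is $K_m = (r_0 - 1)\O(a) \oplus (r_1 + 1)\O(a-1)$, which remains balanced.

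The main obstacle is this last computation in the balanced sub-case. Using that evaluation at $p_m$ on the top-slope summand is generically surjective, one gets $h^0(K_m) = h^0(K_{m-1}) - 1$; an $h^0$-count on candidate splittings of a rank-$(r_0 + r_1)$ bundle of degree $\deg K_{m-1} - 1$ then singles out the balanced splitting type $(r_0 - 1)\O(a) \oplus (r_1 + 1)\O(a-1)$, ruling out wider spreads. All remaining steps reduce to bookkeeping with the invariant $u$.
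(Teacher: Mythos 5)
Your proposal is correct, and it closes the gap that the paper leaves open: the paper's own proof is the single line ``an easy induction on the rank of $E$,'' whereas you induct on the length $m$ of the torsion sheaf, reducing everything to the one-point dichotomy already recorded in the discussion following Lemma \ref{balancing-lem} (unless $E$ is perfectly balanced, $u(E_1)=u(E)-1$; if it is perfectly balanced, the kernel is still balanced). Your observation that the fibre of $K_j$ at $p_i$ equals that of $E$ for $i>j$, so the remaining functionals stay general, is exactly the point needed to iterate, and your case split correctly captures why the dichotomy closes up: once some $K_j$ becomes balanced it stays balanced (this is the ``in particular'' clause of Lemma \ref{balancing-lem}), and if it never does, $u$ drops by exactly $1$ at each of the $m$ steps. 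A rank induction would instead peel off a maximal-degree line subbundle as in the proof of Corollary \ref{kernel}; your route is arguably more natural here since the $m=1$ case is already stated. One small repair: in the balanced sub-case you identify the splitting type of $K_m$ by counting $h^0$, which says nothing when $a<0$ (e.g.\ $K_{m-1}=2\O(-1)$ has $h^0=0$ before and after the modification). Since forming the kernel of a map to a skyscraper commutes with twisting by a line bundle, you should first twist $K_{m-1}$ by $\O(t)$ for $t\gg 0$ and run the $h^0$ count there, or simply quote Lemma \ref{mod-lem} of the paper, which gives the splitting type of a general colength-one down modification directly.
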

 	\begin{proof} An easy induction on the rank of $E$.
 	\end{proof}
 	\begin{cor}
 		Let $E$ be a balanced bundle on $\P^1$ and $a\leq b\in\Z$.
 		Then the kernel $K$ of a general surjection $E\oplus\O(a)\to\O(b)$ is 
 		either balanced or has $u(K)=u(E)-(b-a)$.
 		\end{cor}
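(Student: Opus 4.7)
The plan is to reduce the corollary to Lemma \ref{to-torsion} applied to $E$ itself, by identifying $K$ with the kernel of an induced map from $E$ into a torsion sheaf of length $b-a$.

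Write a general surjection $\eta:E\oplus\O(a)\to\O(b)$ as $\eta=(\phi,\psi)$ with components $\phi:E\to\O(b)$ and $\psi:\O(a)\to\O(b)$. Since $a\leq b$, for general $\eta$ the component $\psi$ is injective with reduced torsion cokernel $\tau=\bigoplus_{i=1}^{b-a}\kk_{p_i}$ supported at $b-a$ distinct general points of $\P^1$. Let $\bar\phi:E\to\tau$ denote the composition of $\phi$ with the projection $\O(b)\to\tau$; for general $\phi$ this is surjective. Set $E':=\ker\bar\phi$, so that $0\to E'\to E\to\tau\to 0$ is exact.

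The key step will be to establish a natural isomorphism $K\cong E'$. I would compute the preimage $\eta^{-1}(\psi(\O(a)))\subset E\oplus\O(a)$: a local section $(e,\alpha)$ lies there iff $\phi(e)\in\psi(\O(a))$, iff $\bar\phi(e)=0$, iff $e\in E'$; hence $\eta^{-1}(\psi(\O(a)))=E'\oplus\O(a)$. Restricting $\eta$ yields a surjection $E'\oplus\O(a)\to\psi(\O(a))\cong\O(a)$ with kernel $K$, and under the identification $\psi(\O(a))\cong\O(a)$ (induced by the isomorphism $\psi$ onto its image) the restriction of this surjection to the $\O(a)$ summand is the identity. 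The kernel is therefore the graph of a bundle map $E'\to\O(a)$, giving $K\cong E'$.

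With this identification in hand, the conclusion is immediate: $E$ is balanced by hypothesis, so Lemma \ref{to-torsion} applied to the surjection $E\to\tau$ with $|\tau|=b-a$ produces that $K=E'$ is either balanced or satisfies $u(K)=u(E)-(b-a)$, which is the claim. The only delicacy is to verify that for general $\eta$, the pair $(\psi,\bar\phi)$ is simultaneously generic in the sense required: that $\psi$ has distinct simple zeros and that $\bar\phi$ is a ``sufficiently general'' surjection $E\to\tau$ for the preceding lemma to apply. Both are codimension conditions on the open locus of surjections inside $\Hom(E,\O(b))\times\Hom(\O(a),\O(b))$, so checking nonemptiness of their simultaneous complement is routine; this is the one place where a careful parameter count would be needed.
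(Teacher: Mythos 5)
Your proof is correct and follows the same route as the paper, which simply says to apply Lemma \ref{to-torsion} to the induced map $E\to\coker(\O(a)\to\O(b))$; your identification of $K$ with the kernel of that induced map is exactly the (unstated) content of that reduction, spelled out. The genericity point you flag at the end is real but harmless, since the surjections $\eta$ for which $\psi$ has distinct simple zeros and $\bar\phi$ is a general map to the resulting torsion sheaf form a dense open subset of the parameter space.
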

 	\begin{proof}
 		Apply the Lemma to the induced map $E\to\coker(\O(a)\to\O(b))$.
 		\end{proof}
 	\begin{lem}\label{balanced-plus-trivial}
 		Let $E$ be a globally generated balanced bundle of slope $s$
 		 on $\P^1$. Then the kernel of a general surjection $E^*\oplus a\O\to\O(b)$
 		 is balanced provided $b\geq (a-1)s$.
 		\end{lem}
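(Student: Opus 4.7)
My plan is to apply the strategy from the proof of Corollary \ref{kernel} to $F := E^* \oplus a\O$. Since $E$ is globally generated, its slope $s$ is non-negative and every summand of $E^*$ has non-positive degree; thus each of the $a$ copies of $\O$ is a top-degree summand of $F$. I peel off one such $\O$: using $b \geq (a-1)s \geq 0$, I map it to $\O(b)$ via a general nonzero section $\sigma: \O \to \O(b)$, which is injective with cokernel the length-$b$ skyscraper $\O_Z$ supported on the zero scheme of $\sigma$. A standard snake-lemma diagram then identifies the kernel $K$ of the original surjection $\phi: F \to \O(b)$ with the kernel of the induced general map
\[F' := E^* \oplus (a-1)\O \longrightarrow \O_Z.\]

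Now apply Lemma \ref{balancing-lem}: this kernel is balanced as soon as $b = |Z| \geq u(F')$. In the cleanest case, where $s$ is a non-negative integer and $E = r\O(s)$ is perfectly balanced, $F' = r\O(-s) \oplus (a-1)\O$ has $\mu_{\min}(F') = -s$, and the direct computation
\[u(F') = \deg(F') - \rk(F')\cdot\mu_{\min}(F') = -sr + (r+a-1)s = (a-1)s\]
matches the hypothesis $b \geq (a-1)s$ exactly, completing the argument. (The boundary case $a=1$ is handled identically and even more simply, since then $F'=E^*$ is itself balanced and the ``in particular'' clause of Lemma \ref{balancing-lem} gives balanced kernel for all $b\geq 0=(a-1)s$.)

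The main obstacle is the general balanced case with non-integer $s$, in which $F' = r_0\O(-\lceil s\rceil) \oplus r_1\O(-\lfloor s\rfloor) \oplus (a-1)\O$ and a direct application of Lemma \ref{balancing-lem} yields $u(F') = (a-1)\lceil s \rceil + r_1$, which overshoots the stated bound $(a-1)s$. To handle this, the alternative plan is to peel off the whole $a\O$ at once: the restriction $\phi|_{a\O}: a\O \to \O(b)$ is a general surjection with balanced kernel $K_2$ (by Lemma \ref{balancing-lem} applied to the trivial bundle $a\O$, whose kernel of a general map to $\O_Z$ of length $b$ is balanced), and $K$ fits in an exact sequence
\[0 \to K_2 \to K \to E^* \to 0.\]
The hypothesis $b \geq (a-1)s$ then places the slopes of $K_2$ and $E^*$ in matching position, so the balanced-extensions principle from the preliminaries forces $K$ to be balanced; a semicontinuity / generic-extension argument disposes of values of $b$ outside the immediate matching range.
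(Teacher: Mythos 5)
Your first reduction is sound and correctly delimited: peeling off one trivial summand identifies $K$ with the kernel of the induced map $E^*\oplus(a-1)\O\to\O_Z$, that induced map is indeed general (because $\Hom(E^*\oplus(a-1)\O,\O(b))\to\Hom(E^*\oplus(a-1)\O,\O_Z(b))$ is surjective, $H^1(E)=0$ by global generation), and your computation $u(E^*\oplus(a-1)\O)=(a-1)s$ in the perfectly balanced case, together with the $a=1$ case, is correct. You have also correctly diagnosed that this route overshoots the stated bound when $s\notin\Z$. The problem is that your fallback for the general case does not close. The exact sequence $0\to K_2\to K\to E^*\to 0$ is right (it exists because a general $a\O\to\O(b)$ is surjective once $a\geq 2$), but the balanced-extension criterion from the preliminaries requires the integer parts of the slopes of sub and quotient to agree, i.e. $\lfloor -b/(a-1)\rfloor=\lfloor -s\rfloor$, which confines $b$ to the window $(a-1)\lfloor s\rfloor<b\leq(a-1)\lceil s\rceil$. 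For every $b>(a-1)\lceil s\rceil$ the slopes are genuinely mismatched, and the sentence ``a semicontinuity / generic-extension argument disposes of values of $b$ outside the immediate matching range'' is precisely the missing proof, not a routine remark: semicontinuity cannot help, since $b$ and hence $\deg K$ are fixed and balancedness is the \emph{open} condition, so you must still exhibit one $\phi$ with balanced kernel; and no result in the paper asserts that an extension of balanced bundles with mismatched slopes has balanced middle term (indeed split such extensions never do).

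If you want to rescue the extension route you need two further facts, neither of which appears in your proposal: (1) the connecting map $\Hom(E^*,\O(b))\to\Ext^1(E^*,K_2)$ is surjective --- this does hold, because its cokernel injects into $\Ext^1(E^*,a\O)=aH^1(E)=0$ --- so the extension class produced by a general $\phi$ really is a general element of $\Ext^1(E^*,K_2)$; and (2) a \emph{general} extension of a balanced bundle by a balanced bundle of smaller or equal slope is balanced. Fact (2) is true but strictly stronger than the matching-slope lemma you invoke and requires its own argument. The paper gives only a one-line proof (``similar to the above, by induction on $a$''), intending the bookkeeping of Lemmas \ref{balancing-lem} and \ref{to-torsion} on the unbalanced degree $u(\cdot)$ rather than an extension argument, so there is little to compare against in detail; but as written, your argument establishes the lemma only for $E$ perfectly balanced, for $a=1$, and for $b$ in the initial window $(a-1)s\leq b\leq(a-1)\lceil s\rceil$.
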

 	\begin{proof}
 		Similar to the above, by induction on $a$.
 		\end{proof}
 	\subsection{Rational curves in Grassmannians} A curve $C\to\gG(k, n)$
 	corresponds to a surjection on $C$, $\phi: (n+1)\O\to E$ where $E$ is a rank-$(k+1)$ bundle, i.e.
 	$C$ corresponds 
 	to an $(n+1)$-dimensional subspace of $H^0(E)$ generating $E$ where $E$ is a bundle 
 	of rank $(k+1)$. Then $E$ may be identified with the restriction of the tautological
 	quotient bundle $Q^{k+1}_{\gG}$ on $C$. When $C\simeq \P^1$, $E$ is the $h^0$-preserving limit of a balanced
 	bundle, and it follows that for $C$ general, i.e. general in its family,
 	 the restriction of $Q^{k+1}_\gG$ is balanced.
 	Applying this reasoning to the 'transpose' $\phi^*: (n+1)\O\to (\ker(\phi))^*$
shows that 	the tautological subbundle $S^{n-k}_\gG|_C=\ker(\phi)$ is balanced as well
for $C$ general. 
\subsection{Schubert cycles, flag resolutions}\label{schubert}
Here we think of $\gG(k,n)$ as parametrizing $(k+1)$-dimensional vector
subspaces of $\kk^{n+1}$.
Fix a flag $\P^0\subset\P^1\subset...\subset \P^n$ and
the corresponding vector space flag $V^1\subset...\subset V^{n+1}=V$.
 Let $(a.)$ be a partition
of the form
\eqspl{partition}{n-k\geq a_0\geq...\geq a_k\geq 0,  a_{k+1}:= -1.}
The associated Schubert cycle of codimension $A=\sum a_i$ in $\gG(k,n)$ is defined as
\eqspl{schubert-eq}
{\Sigma(a.)=\{L: \dim(L\cap\P^{n-k-a_i+i})\geq i, i=0,...,k\}.}
(This is actually contained in $\gG(k, n-a_k)$.)
Schubert cycles are often singular where one of the inequalities in \eqref{schubert-eq}
are strict, and can be resolved by considering suitable flags.

We define the \emph{strict flag resolution} 
\[\tilde\Sigma(a.)\to\Sigma(a.)\] as follows.
First define integers $k_0<...<k_u=k$ inductively as follows. 
\[k_0=\min(i:a_{i+1}<a_i), k_{j+1}=\min(i>k_j:a_{i+1}<a_i).\]
These are the 'break points' of the partition $(a.)$. Thus if the partition $(a.)$
is without repetitions then $k_i=i, i=0,...,k$. Note that $\Sigma(a.)$ depends only
on $a_{k_0}>...>a_{k_u}$:
\eqspl{sigma*}{
\Sigma(a.)=\{L: \dim(L\cap\P^{n-k-a_{k_i}+k_i})\geq k_i, i=0,...,u\}	
}
and $A=\sum (k_i-k_{i-1})a_{k_i}$.
Set \[\Sigma_0=\gG(k_0, n-k+k_0-a_{k_0})=G(k_0+1, n-k+k_0-a_{k_0}+1).\]
This is endowed with a tautological subbundle $S^0_{k_0+1}$ and quotient bundle
$Q^0_{n-k-a_0}$. Note that $\Sigma_0$ is a point iff $a_{k_0}=n-k$.\par
Then let \[F_1=V^{n-k+1+k_1-a_{k_1}}\otimes\O/S^0_{k_0+1}=
Q^0_{n-k-a_0}\oplus (V^{n-k+1+k_1-a_{k_1}}/V^{n-k+1+k_0-a_{k_0}})\otimes\O,\]
\[\Sigma_1=G(k_1-k_0, F_1).\]
This is endowed with tautological bundle $S^1_{k_1-k_0}, Q^1_{n-k-a_{k_1}}$
	as well as a larger 'cumulative' tautological subbundle 
	$T^1_{k_1+1}\subset V^{n-k+1+k_1-a_{k_1}}\otimes\O$ which is filtered with graded
	$S^0\oplus S^1$. \par
In general if $F_i, \Sigma_i$ are defined, and on $\Sigma_i$
we have the quotient bundle $Q^i_{n-k-a_{k_i}}$ and subbundle $T^i_{k_i+1}$, then let
\[F_{i+1}=V^{n-k+1+k_{i+1}-a_{k_{i+1}}}\otimes\O / T^i= Q^i_{n-k-a_{k_i}}\oplus 
(V^{n-k+1+k_{i+1}-a_{k_{i+1}}}/V^{n-k+1-k_i+a_{k_i}})\otimes\O,\]
\[ \Sigma_{i+1}=G(k_{i+1}-k_i, F_{i+1}),\]
and $S^{i+1}_{k_{i+1}-k_i}, Q^{i+1}_{n-k-a_{k_{i+1}}}, T^{i+1}_{k_{i+1}}$ be as before.
Finally
\[\tilde\Sigma(a.)=\Sigma_{u}.\]
Then $\tilde\Sigma(a.)$ parametrizes partial flags $(L_{k_0}\subset...\subset L_{k_u}= L_k)$
of the indicated dimensions such that $L_{k_i}\subset \P^{n-k-a_{k_i}+k_i}, i=0,...,u$.
\par
Obviously $\tilde\Sigma(a.)$ is smooth, as are each of the morphisms
$\Sigma_{i+1}\to\Sigma_i$.
Note the surjection $(n+1)\O\to Q^u_{n-k}$ with kernel $T^{u}_{k+1}$ whose graded is
$\mathrm{gr}_.T^u=\bigoplus\limits_{i=0}^u S^{i}$. This yields a map
$\tilde\Sigma(a.)\to \gG(k,n)$ which is birational onto $\Sigma(a.)\subset\gG(k, n-a_k)$. \par
\begin{lem}\label{small}
	$\tilde\Sigma(a.)\to \Sigma(a.)$ is a small resolution.
	\end{lem}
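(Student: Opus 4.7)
The plan is to verify smallness by explicitly describing the fibers of $\pi:\tilde\Sigma(a.)\to\Sigma(a.)$ and stratifying the base according to intersection excesses. From the iterated Grassmann-bundle construction, a point of $\tilde\Sigma(a.)$ is a partial flag $L_{k_0}\subset L_{k_1}\subset\cdots\subset L_{k_u}$ with $\dim L_{k_i}=k_i+1$ and $L_{k_i}\subset V^{m_{k_i}}$, where $m_{k_i}:=n-k-a_{k_i}+k_i+1$, and $\pi$ sends this data to $L:=L_{k_u}\in\Sigma(a.)$. Thus $\pi^{-1}(L)$ parametrizes sub-flags of $L$ compatible with $V^\bullet$: setting $d_i:=\dim(L\cap V^{m_{k_i}})$ and $\epsilon_i:=d_i-(k_i+1)\geq 0$ (so that $\epsilon_u=0$ automatically), the fiber is an iterated Grassmann bundle of total dimension
\[
\dim\pi^{-1}(L)=\sum_{i=0}^{u-1}(k_i-k_{i-1})\epsilon_i,\qquad k_{-1}:=-1,
\]
obtained by iteratively choosing $L_{k_i}/L_{k_{i-1}}$ as a $(k_i-k_{i-1})$-subspace of the $(d_i-k_{i-1}-1)$-dimensional quotient $(L\cap V^{m_{k_i}})/L_{k_{i-1}}$. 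In particular the generic fiber is a single point, so $\pi$ is birational.

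Next I stratify $\Sigma(a.)$ by the excess vector $\epsilon=(\epsilon_0,\ldots,\epsilon_{u-1})$; the closure of the stratum $\Sigma(a.)_\epsilon$ is again a Schubert variety $\Sigma(a^\epsilon.)$, determined by the strengthened conditions $\dim(L\cap V^{m_{k_i}})\geq k_i+1+\epsilon_i$ and corresponding to the minimal non-increasing partition $a^\epsilon_j=\max\{a_{k_i}+\epsilon_i:k_i+\epsilon_i\geq j\}$. A direct Young-diagram tally gives $\codim_{\Sigma(a.)}\Sigma(a.)_\epsilon=|a^\epsilon|-|a|$; in the generic case where the shifted indices satisfy $k_i+\epsilon_i<k_{i+1}$ for all $i$, this rearranges to
\[
|a^\epsilon|-|a|=\sum_{i=0}^{u-1}\epsilon_i\bigl[(k_i-k_{i-1})+(a_{k_i}-a_{k_{i+1}})+(\epsilon_i-\epsilon_{i-1})\bigr]
\]
with conventions $a_{k_{u+1}}:=0$ and $\epsilon_{-1}:=0$.

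Smallness then reduces to the inequality
\[
\sum_{i=0}^{u-1}\epsilon_i\bigl[(a_{k_i}-a_{k_{i+1}})-(k_i-k_{i-1})+(\epsilon_i-\epsilon_{i-1})\bigr]>0\qquad(\epsilon\neq 0),
\]
which I expect to deduce from the break-point inequality $a_{k_i}-a_{k_{i+1}}\geq 1$ together with the nonnegativity of the quadratic contribution $\sum_i\epsilon_i(\epsilon_i-\epsilon_{i-1})$. The main obstacle is the overlap case, where a shifted index $k_i+\epsilon_i$ crosses $k_{i+1}$ and the shape of $a^\epsilon$ changes qualitatively (intermediate break-points get absorbed and the effective partition simplifies). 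I plan to handle this by induction on the number $u+1$ of break-points, using the tower structure $\Sigma_u\to\Sigma_{u-1}$ to reduce to the strict flag resolution of a truncated partition on the fiber of this tower map, while a direct bound estimates the extra codimension contributed at the top level.
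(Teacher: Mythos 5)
There are two problems here. First, as written this is a plan rather than a proof: the decisive inequality at the end is only ``expected'' to follow from the break-point condition, and the ``overlap case'' (where a shifted index $k_i+\epsilon_i$ crosses $k_{i+1}$) is deferred to an unexecuted induction. Second, and more seriously, the statement you are aiming at --- smallness in the strong, stratified sense that the locus of fibres of dimension $\geq j$ has codimension $>2j$ --- is false in general for these resolutions, so no completion of your argument along these lines can succeed. Take $k=2$, $n=5$, $(a.)=(1,1,0)$, so $k_0=1$, $k_1=2$, $u=1$, and $\Sigma=\{L^3\subset V^6:\dim(L\cap V^4)\geq 2\}$ has dimension $7$. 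The excess locus $\{L\subset V^4\}\simeq G(3,4)$ has codimension $4$ in $\Sigma$, and the fibre over it is $G(2,3)\simeq\P^2$ of dimension $2$; since $4=2\cdot 2$, the map is semismall but not small in your sense. Your own inequality already detects this: with $\epsilon_0=1$ the summand is $(a_{k_0}-a_{k_1})-(k_0-k_{-1})+(\epsilon_0-\epsilon_{-1})=1-2+1=0$, so the proposed deduction from $a_{k_i}-a_{k_{i+1}}\geq 1$ plus nonnegativity of the quadratic term cannot yield strict positivity whenever some gap $k_i-k_{i-1}$ exceeds $1$.

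What the paper actually proves --- and all it uses later, e.g.\ to identify $G_i$ and the normal bundle over the big open set $\Sigma^0_*(a.)$ and to let general curves avoid the exceptional locus --- is the much weaker assertion that $\tilde\Sigma(a.)\to\Sigma(a.)$ is an isomorphism in codimension one: at each stage of the tower the exceptional set is a degeneracy locus of a bundle map between bundles whose ranks differ by $a_{k_i}-a_{k_{i+1}}$, hence has codimension $a_{k_i}-a_{k_{i+1}}+1\geq 2$ (the paper's displayed codimension has the two terms transposed, a sign typo), and flatness of the remaining tower maps preserves this upstairs. Your fibre and stratum computations, which check out where I tested them (including the fibre-dimension formula $\sum(k_i-k_{i-1})\epsilon_i$), do give this weaker statement cleanly once the codimension formula for the strata is established: the codimension of the stratum minus the fibre dimension equals $\sum\epsilon_i[(a_{k_i}-a_{k_{i+1}})+(\epsilon_i-\epsilon_{i-1})]$, where the first sum is $\geq 1$ for $\epsilon\neq 0$ and $2\sum\epsilon_i(\epsilon_i-\epsilon_{i-1})=\epsilon_{u-1}^2+\sum(\epsilon_i-\epsilon_{i-1})^2\geq 1$, so the exceptional locus upstairs has codimension $\geq 2$. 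I suggest you either prove that statement, which is the one the argument of the paper requires, or, if you keep the stratified definition, downgrade the claim to semismallness, which is what your formulas actually support.
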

\begin{proof}
Because $a_{k_i}-a_{k_{i+1}}>0$, the exceptional locus of the natural map
$\Sigma_i\to\Sigma_*(a.)$, i.e. the degeneracy locus of the map 
$S_{k_{i+1}-k_i}\to (V^{n-k+1+k_{i+1}-a_{k_{i+1}}}/V^{n-k+1-k_i+a_{k_i}})\otimes\O$,
has codimension $a_{k_{i+1}}-a_{k_i}+1>1$. And because $\tilde\Sigma(a.)\to\Sigma_i$
is flat, the same is true for the inverse image of the latter exceptional locus is
$\tilde\Sigma(a.)$.
\end{proof}
 The latter is the reason for working with
partial flags as above rather than the more standard full flags.\par

Two simpler cases of this construction are the extremal ones:\par
\begin{example}
	When $(a.)$ is strictly decreasing. Then all $k_i=i$ and the Grassmannian bundles
	are projective bundles.\end{example}
\begin{example}	When $a_0=...=a_{\alpha-1}=n-k, a_\alpha,=...=a_k=\beta$, 
	$\Sigma(a.)=\gG(k-\alpha, n-\alpha-\beta)$ is smooth and equals $\tilde\Sigma(a.)$
	(concretely, this is the locus of subspace $L$ containing $\P^{\alpha-1}$ and contained in
	$\P^{n-\beta}$). Note that in this case $\Sigma_0=\gG(\alpha-1, \alpha-1)$ is a point,
	$Q^0_{n-k-a_0}=0$ while $S^0_{k_0+1}$ has rank $\alpha$.
	\end{example}
/*******\par
*********/\par
\subsection{Normal bundle}\label{normal-sec}
To simplify notation, set
\[W_i=V^{n+1-k+k_i-a_{k_i}}, 0=0,...,u, W_{-1}=0.\]
Define a sheaf $G_i$ on $\tilde\Sigma(a.)$ by the exact sequence

/******
*****/

\eqspl{g}{0\to T^{u}/T^i\to (V/W_i)\otimes\O\to G_i\to 0, 0\leq i\leq u
	}
Thus $G_0$ is the pullback of the tautological quotient bundle $Q_{n-k}$
while $G_u=(V/W_u)\otimes\O$.
Note that over the big open subset $\Sigma_0(a.)\subset\Sigma(a.)$ where all
the inequalities of \eqref{sigma*} are equalities, the map
$\tilde\Sigma(a.)\to \Sigma(a.)$ is locally an iso, and $G_i$ is locally
free of rank $a_{k_i}$, and coincides with the quotient of the tautological
rank-$(n-k)$ quotient bundle $Q^u_{n-k}$ by the image of $V^{n-k-a_{k_i}+k_i+1}\otimes\O$. 
Then we have (recall $k_{-1}=-1$):
\begin{lem}\label{normal}
	The normal bundle $N_{\Sigma_*^0(a.)\to\gG}$ admits a filtration with graded
	$\bigoplus\limits_{i=0}^u S^{i*}_{k_i-k_{i-1}}\otimes G_{i}$.
	
	\end{lem}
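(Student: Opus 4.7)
The plan is to realize the filtration explicitly on the open smooth stratum $\Sigma^0_*(a.)$, on which $\tilde\Sigma(a.)\to\Sigma(a.)$ is an isomorphism, so we may do all calculations with tautological bundles on $\tilde\Sigma$. On this stratum the rank-$(k+1)$ tautological subbundle $T^u\subset V\otimes\O$ carries its flag $0\subset T^0\subset T^1\subset\cdots\subset T^u$, with $T^i=T^u\cap (W_i\otimes\O)$ and $T^i/T^{i-1}=S^i_{k_i-k_{i-1}}$ (where $S^0_{k_0+1}$ fits in since $k_{-1}=-1$). First I would identify $T_\gG|_{\Sigma^0_*}=\Hom(T^u,Q^u)$ with $Q^u=V\otimes\O/T^u$, and then identify $T_\Sigma|_{\Sigma^0_*}$ as the subsheaf consisting of those $\phi:T^u\to Q^u$ whose restriction to each $T^i$ lands in $(W_i\otimes\O+T^u)/T^u\subset Q^u$; this is the standard first-order deformation computation for preserving the incidence of the moving flag $T^\bullet$ with the fixed flag $W_\bullet$.

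Next I would introduce the descending chain of subsheaves $\cF^{-1}\supset\cF^0\supset\cdots\supset\cF^u$ of $T_\gG|_{\Sigma^0_*}$ defined by
\[
\cF^j=\{\phi\in\Hom(T^u,Q^u):\phi(T^i)\subset (W_i\otimes\O+T^u)/T^u\text{ for all }i\leq j\}.
\]
By construction $\cF^{-1}=T_\gG|_{\Sigma^0_*}$ and $\cF^u=T_\Sigma|_{\Sigma^0_*}$, so passing to the quotient gives a filtration of $N_{\Sigma^0_*/\gG}=T_\gG|_{\Sigma^0_*}/T_\Sigma|_{\Sigma^0_*}$ with graded $\bigoplus_{j=0}^u\cF^{j-1}/\cF^j$.

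It then remains to identify each $\cF^{j-1}/\cF^j$. For $\phi\in\cF^{j-1}$ the obstruction to $\phi\in\cF^j$ is the composite
\[
T^j\hookrightarrow T^u\xrightarrow{\phi}Q^u\twoheadrightarrow Q^u\big/\bigl((W_j\otimes\O+T^u)/T^u\bigr).
\]
Because $\phi(T^{j-1})\subset (W_{j-1}\otimes\O+T^u)/T^u\subset (W_j\otimes\O+T^u)/T^u$, this composite vanishes on $T^{j-1}$ and therefore factors through $T^j/T^{j-1}=S^j_{k_j-k_{j-1}}$. The target rewrites as $(V\otimes\O)/(W_j\otimes\O+T^u)$, which on $\Sigma^0_*$, where $T^u\cap(W_j\otimes\O)=T^j$, coincides with $((V/W_j)\otimes\O)/(T^u/T^j)$; by the defining sequence \eqref{g} this is precisely $G_j$. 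Hence $\cF^{j-1}/\cF^j\cong S^{j*}_{k_j-k_{j-1}}\otimes G_j$, which is the claim.

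The main technical point I expect to have to address is verifying that each $\cF^j$ is a genuine vector subbundle (not just a subsheaf) and that the natural map $\cF^{j-1}\to\Hom(T^j/T^{j-1},G_j)$ is surjective; both follow from a rank count on the open stratum, which is forced by the codimension identity $\sum_{j=0}^u(k_j-k_{j-1})a_{k_j}=A=\codim_\gG\Sigma(a.)$. Once this bookkeeping is in place, the described filtration of $N_{\Sigma^0_*/\gG}$ with graded $\bigoplus_{i=0}^u S^{i*}_{k_i-k_{i-1}}\otimes G_i$ follows immediately.
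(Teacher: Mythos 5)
The paper states Lemma \ref{normal} without any proof (it is followed immediately by an example), so there is nothing to compare your argument against; on its own terms your argument is correct and complete. You give the standard tangent-space computation for a Schubert variety along its open stratum: identifying $T_\gG|_{\Sigma^0_*}$ with $\Hom(T^u,Q^u)$, cutting out $T_\Sigma$ by the incidence conditions $\phi(T^i)\subset (W_i\otimes\O+T^u)/T^u$, and filtering the quotient by imposing these conditions one index at a time. The identification of the $j$-th graded piece is right: the obstruction map factors through $T^j/T^{j-1}=S^j$ because $W_{j-1}\subset W_j$, and the target $(V\otimes\O)/(W_j\otimes\O+T^u)$ is exactly $G_j$ by the defining sequence \eqref{g}, using that $T^u\cap(W_j\otimes\O)=T^j$ on the open stratum. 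Your closing rank count, $\sum_j (k_j-k_{j-1})a_{k_j}=A=\codim_\gG\Sigma(a.)$, is indeed what forces both the equality $\cF^u=T_\Sigma|_{\Sigma^0_*}$ (a priori one only has the inclusion $T_\Sigma\subseteq\cF^u$) and the surjectivity of each $\cF^{j-1}\to S^{j*}\otimes G_j$ fibrewise, hence the subbundle and local-freeness claims; it would be worth making explicit that the count is applied pointwise so that the quotients have constant rank.
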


\begin{example}
	For the 'simple' Schubert cycle $\Sigma=\gG(k-\alpha, n-\alpha-\beta)$, the 
	normal bundle is filtered with graded pieces
	$\alpha Q_{n-k-\beta}, \beta S^*_{k+1-\alpha}, \alpha\beta\O$.
	\end{example}
/***************
\subsubsection{Duality}

********************/
\subsection{Schubert supercycles and the standard degeneration}\label{super}
Let $\Sigma=\Sigma(a.)\subset\gG=\gG(k,n)$ be a Schubert cycle as above. 
The associated \emph{standard degeneration} of $\gG$ is
\[\pi:B_{\Sigma\times 0}\gG\times\A^1\to\A^1.\] 
The nonzero fibres of $\pi$ are $\gG$, while
\[\pi\inv(0)=\fF_1\cup_E\fF_2\]
where 
\[\fF_1=B_\Sigma\gG, \fF_2=\P(N_{\Sigma/\gG}\oplus\O), E=\P(N_{\Sigma/\gG}).\]
$\fF_2$ is called the \emph{Schubert supercycle} $\Sigma^+(a.)$
associated to $(a.)$. It admits a small resolution of the form
\[\tilde\Sigma^+(a.)=\P(N_{\tilde\Sigma(a.)\to \gG}\oplus\O).\]  
Over $\Sigma_*^0(a.)$, the normal bundle in question is described by \S \ref{normal-sec}.	
 \section{Minimal surface scrolls}
See \cite{eisenbud-harris-minimal}. 
A minimal curve $C_{n-1}\subset\gG(1, n)$ corresponds to a minimal surface scroll
 in $\P^n$, i.e. a nondegenerate scroll $S_{n-1}$ of minimal degree, namely $n-1$.
We have 
\eqsp{S_{n-1}=&\P_{\P^1}(2\O(\frac{n-1}{2})),  n\  \mathrm{odd}\\ 
 =&\P_{\P^1}(\O(n/2)\oplus\O(n/2-1)), n\  \mathrm{even}.} 
This is embedded in each case by $\O(1)$. 
 $C_{n-1}$ moves in a $(n^2-3)$-dimensional family and has
 specializations of the form $C_a\cup C_b$ for all $a+b=n-1$,
 The normal bundle $N=N_{C_{n-1}/\gG(1, n)}$
 has degree $n^2-3$ and rank $2n-3$ hence slope
 \[s_{n-1}=\frac{n^2-3}{2n-3}.\]
 By an elementary calculation, we have $[s_{n-1}]=\frac{n+1}{2}$
 for $n$ odd and $[s_{n-1}]=\frac{n}{2}$ for
 $n$ even.
 \begin{prop} (i) A general $S_{n-1}$ contains $(n+3)/2$ (resp. $n/2+1$) general lines in
 	$\P^n$ as rulings for $n\geq 3$ odd (resp. $n\geq 4$ even).\par
 	(ii) (char. = 0)  $C_{n-1}$ is balanced for all $n\geq 2$.\par
 	(iii) (char. = 0) For $n\geq 3$ odd (resp. $n\geq 4$ even),
 	the scrolls $S_{n-1}$ through $(n+3)/2$ (resp. $n/2+1$) general lines
 	fill up a variety of dimension $(n+1)/2$ (resp. fill up $\P^n$).
 	\end{prop}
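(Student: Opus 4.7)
The plan centers on part (ii): once the normal bundle $N = N_{C_{n-1}/\gG(1,n)}$ is shown balanced in characteristic zero, parts (i) and (iii) become explicit numerics in its splitting type, which is forced by rank $2n-3$ and degree $n^2-3$ to be $N = (m-1)\O(m+2) \oplus 3m\O(m+1)$ for $n = 2m+1$ odd and $N = (3m-3)\O(m+1) \oplus m\O(m)$ for $n = 2m$ even. I would prove (ii) by induction on $n$. The base cases $n = 2$ (where $C_1$ is a line in $\P^2$ with $N = \O(1)$) and $n = 3$ (where $C_2$ is a conic in $\gG(1,3)$ corresponding to the smooth quadric $\P^1\times\P^1 \subset \P^3$, with $N = 3\O(2)$) can be verified directly.

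For the inductive step I would degenerate $C_{n-1}$ to a nodal union $C_{n-2} \cup_p C_1$, where $C_{n-2}$ is a minimal balanced curve of degree $n-2$ lying in the sub-Grassmannian $\gG(1,n-1) \hookrightarrow \gG(1,n)$ of lines in a fixed hyperplane $\P^{n-1} \subset \P^n$, and $C_1$ is a general line of $\gG(1,n)$ meeting $C_{n-2}$ at a general point $p$. By the inductive hypothesis, $N_{C_{n-2}/\gG(1,n-1)}$ is balanced. The sub-Grassmannian has normal bundle identified as $N_{\gG(1,n-1)/\gG(1,n)} \simeq Q^2|_{\gG(1,n-1)}$, which restricts to a balanced bundle on $C_{n-2}$ by the Rational Curves in Grassmannians subsection. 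The exact sequence
\[
0 \to N_{C_{n-2}/\gG(1,n-1)} \to N_{C_{n-2}/\gG(1,n)} \to Q^2|_{C_{n-2}} \to 0
\]
has vanishing $\Ext^1$ (both factors are positive with suitably matched slopes), so it splits, and the middle term is a direct sum of two balanced bundles whose slopes differ by at most one, hence balanced. A similar but more elementary computation yields $N_{C_1/\gG(1,n)}$ balanced.

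The smoothing step uses the strategy described in the introduction: the normal sheaf of the reducible curve restricts to $N_{C_i/\gG(1,n)}(p)$ on each component, and the normal bundle of a smoothing $C_{n-1}$ arises as the kernel of a general gluing map at the node. Applying Lemma \ref{balancing-lem} and Corollary \ref{kernel} to the evaluation at $p$ produces the balanced conclusion, finishing (ii). Claim (i) follows at once: for balanced $N$ with the splitting above, $h^0(N(-qP)) = r_1(c+2-q) + r_2(c+1-q)$ when both summands are nonnegative, and the largest $q$ making this nonnegative equals $c+1$, i.e. $(n+3)/2$ or $n/2+1$ according to parity.

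For (iii), the parameter space of scrolls through $q$ general lines has dimension $h^0(N(-qP))$ and the total incidence with the scroll adds $2$; differential analysis at a general point of an odd-$n$ scroll, using evaluation of $H^0(N(-qP))$ composed with the quotient $N \to N_{S/\P^n}|_{C}$, shows that this incidence maps with generically finite fibers to a variety of exact dimension $(n+1)/2$, while for even $n$ the incidence dimension exceeds $n$ and the trivial-degree summands of $N(-qP)$ evaluate surjectively onto the transverse directions, forcing the image to equal $\P^n$. The main obstacle throughout is the balanced matching at the node in the smoothing: the slopes of $N_{C_{n-2}/\gG(1,n)}$ and $N_{C_1/\gG(1,n)}$ need not coincide, so the balanced-extensions lemma does not apply directly, and one must invoke the balanced-kernels framework while carefully tracking how the upper and lower slope summands interleave at $p$. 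This is also where the characteristic-zero hypothesis enters, to ensure the node-gluing data is sufficiently generic.
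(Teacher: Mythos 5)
Your proposal inverts the paper's logic: you prove (ii) first by a normal-bundle computation on a degeneration and then deduce (i), whereas the paper proves (i) first by a purely synthetic induction (degenerating the scroll through the given lines to $S_{n-3}\cup S_2$ sharing a common ruling $M$ for $n$ odd, resp.\ to $S_{n-2}$ plus a plane pencil for $n$ even) and then deduces (ii) from (i) using characteristic $0$, via the observation that balancedness here is equivalent to $N$ having no quotient of slope $<(n+1)/2$, which would obstruct passage through $(n+3)/2$ general points. Your direction (ii)$\Rightarrow$(i) is fine, but your proof of (ii) has a concrete error. In the sequence $0\to N_{C_{n-2}/\gG(1,n-1)}\to N_{C_{n-2}/\gG(1,n)}\to S_2^*|_{C_{n-2}}\to 0$ the slopes do \emph{not} match: the sub has slope $\frac{(n-1)^2-3}{2n-5}$ with round-down $n/2$ (resp.\ $(n-1)/2$) for $n$ even (resp.\ odd), while the quotient has slope $(n-2)/2$ with round-down $(n-2)/2$ (resp.\ $(n-3)/2$); the round-downs differ by $1$ in both parities. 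The criterion for a direct sum of two balanced bundles to be balanced is equality of the round-downs of the slopes (this is exactly the matching condition in the paper's ``Balanced extensions'' subsection), not ``slopes differ by at most one.'' Concretely, for $n=6$ one gets $N_{C_4/\gG(1,6)}\simeq\O(4)\oplus 6\O(3)\oplus 2\O(2)$ and for $n=5$ one gets $3\O(3)\oplus 3\O(2)\oplus\O(1)$: three distinct twists, not balanced. (The cases $n=3,4$ happen to work, which may be why the error is easy to miss.)

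Because the body $C_{n-2}$ of your comb is genuinely unbalanced for $n\geq 5$, the final smoothing step is unsupported. Lemma \ref{balancing-lem} and Corollary \ref{kernel} concern kernels of general maps to torsion sheaves or line bundles; they say nothing about the normal bundle of a smoothing of a nodal curve, and no lemma in this paper lets you conclude that attaching a single balanced tail $C_1$ (with $N_{C_1}\simeq(n-1)\O(1)\oplus(n-2)\O$) and smoothing repairs a spread of $2$ between the extreme twists of the body bundle. You flag this mismatch yourself as ``the main obstacle,'' but flagging it is not resolving it; as written the induction fails from $n=5$ on. The characteristic-zero hypothesis also does not enter where you place it (genericity of gluing data is characteristic-free); in the paper it enters only to pass from the interpolation statement (i) to the splitting type (ii) via generic smoothness. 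If you want to salvage your route, you would need either to re-do the degeneration so that the body is balanced (e.g.\ follow the paper and degenerate the \emph{scroll} inside complementary linear spans rather than the curve inside a sub-Grassmannian), or to import a comb-smoothing theorem that controls modifications of an unbalanced body, neither of which is available in this paper.
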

 \begin{rem*}
 	Assertions (ii), (iii) are very likely true in all characteristics.
 	\end{rem*}
 \begin{proof} 
 	We first consider the case $n\geq 3$ odd. We note to begin with that (ii) follows from (i):
 	in fact, write
 	\[\deg(N)=n^2-3=\frac{n-3}{2}\frac{n+3}{2}+\frac{3(n-1)}{2}\frac{n+1}{2}.\]
 	Then (ii) means exactly that this expression corresponds to a decomposition
 	\eqspl{}{N=\frac{n-3}{2}\O(\frac{n+3}{2})\oplus \frac{3(n-1)}{2}\O(\frac{n+1}{2}).}
 	This equivalent to te assertion  that $N$ has no quotient of slope $<(n+1)/2$.
 	Since char.= 0, it suffices to prove that a general $C_{n-1}$ contains $(n+3)/2$
 	general points of $\gG(1, n)$, i.e. that (i) holds.
 	To this end we use induction, with the case $n=3$ being well known.
 	For the induction step, suppose given general lines $L_1,...,L_{(n+3)/2}$ in $\P^n$
 and let $\P^{n-2}$ be the span of $L_1,...,L_{(n-1)/2}$, $\P^3$ the span of $L_{(n+1)/2},
 L_{(n+3)/2}$ and let $M=\P^{n-2}\cap\P^3$. By induction, there is an $S_{n-3}\subset\P^{n-2}$
 containing $L_1,...,L_{(n-3)/2},M$ and an $S_2\subset\P^3$ containing
 $L_{(n+1)/2}, L_{(n+3)/2}, M$ and clearly $S_{n-3}\cup S_2$ smooths out to $S_{n-1}$.
 QED $n$ odd.\par 
 
  	Now assume $n$ is even.
 	Let us write
 	\[\deg(N)=n^2-3=(3n/2-3)(n/2+1)+(n/2)(n/2).\]
 	Then (ii) amounts to proving that this expression corresponds to a decomposition
 	\eqspl{}{N=(3n/2-3)\O_{C_{n-1}}(n/2+1)\oplus (n/2)\O_{C_{n-1}}(n/2).}
 	As above, this follows from (i).
Thus it suffices to prove that the general	$S_{n-1}$ contains $n/2+1$ general lines 
 $L_1,...,L_{n/2+1}$ as rulings.
 Let $\P^{n-1}$ be the span of $L_1,...,L_{n/2}$, let $S_{n-2}$ be general through
 $L_1,...,L_{n/2}$ and through $p:=L_{n/2+1}\cap\P^{n-1}$, let $M$ be the ruling
 of $S_{n-2}$ through $p$ and let $\P^2$ be the span of $M$ and $L_{n/2+1}$. 
 Then $S_{n-2}$ plus the pencil in $\P^2$ through $p$ is a specialization of
 $S_{n-1}$ containing $L_1,...,L_{n/2+1}$.
 \par
 Finally to prove (iii) we use induction. For $n$ odd the initial case $n=3$
 is well known. For the induction step, consider the total space $\ul S_{n-1}$
 of the family of minimal 
 scrolls through $(n+3)/3$ fixed rulings, with its natural map $f_{n-1}:\ul S_{n-1}\to\P^n$.
 Then our assertion is that $f_{n-1}$ is generically finite to its image. This can be checked
 for a reducible scroll of the form $S_{n-3}\cup S_2$ as above, at a general point of $S_{n-3}$, 
 where it holds by induction.
 \par Finally assume $n$ even and $\geq 4$. Notations as above, we want to prove that
 $f_{n-1}$ has general fibre dimension $n/2-1$. First for $n=4$, the total space $\ul S_3$
 	has a divisor corresponding to surfaces of the form $S_2\cup S_1$ and when restricted
 	on that divisor $f_3$ has fibre dimension 0 at a general point of $S_2$. 
 	Therefore overall $f_3$ has general fibre dimension 1,
 	as required. For $n\geq 6$ we argue similarly but using instead a reducible $S_{n-1}$ scroll
 	of the form $S_{n-3}\cup S_2$ to show inductively that $f_{n-1}$ has general fibre dimension $n/2-1$.
 	\end{proof}
 Note that it follows from the Proposition that
 for $n$ odd (resp. even), the deformations of $C_{n-1}$ containing
 $(n+3)/2$ (resp. $n/2+1$ fixed rulings fill up a subvariety of $\gG(1, n)$
 of dimension $(n-1)/2$ 
 (resp. $3n/2-2$).
 We actually need a slightly stronger fact:
 \begin{cor}\label{perfect-cor}(char = 0)
 	Let $[L]\in C_{n-1}$ be  general and $p\in L$ a general point. 
 	For $n\geq 3$ odd (resp. even) let $V$ be a general $\P^{(n-1)/2}$ through $p$
 	(resp. a general $\P^{n/2+1}$ containing $L$). Let $\Sigma\subset\gG(1, n)$
 	be the subvariety consisting of lines meeting $V$ (resp. containing $p$
 	and contained in $V$ and let $\tilde\gG$ be the blowup of $\gG(1, n)$ in $\Sigma$.
 	Then the birational transform of $C_{n-1}$ in $\tilde\gG$ has normal bundle
 	$(2n-3)\O((n+1)/2)$ (resp. $(2n-3)\O(n/2)$.
 	\end{cor}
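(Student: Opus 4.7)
The plan is to realise $N_{\tilde C/\tilde\gG}$ as the kernel of an explicit surjection from $N_{C/\gG}$ onto a torsion sheaf at $[L]$, and then to use the genericity of $V$ to conclude that this kernel is perfectly balanced.

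First I will verify that for generic $V$ the intersection $C\cap\Sigma=\{[L]\}$ set-theoretically, with transverse intersection there, and carry out the standard blow-up normal bundle calculation. If $C$ meets a smooth subvariety $\Sigma\subset\gG$ of codimension $c$ transversally at a single point $[L]$, then the strict transform $\tilde C\subset\tilde\gG=\Bl_\Sigma\gG$ is isomorphic to $C$, and there is a short exact sequence
\[
0\to N_{\tilde C/\tilde\gG}\to N_{C/\gG}\to\tau\to 0,
\]
with $\tau$ a torsion sheaf at $[L]$ of length $c-1$, canonically identified as $N_{\Sigma/\gG}|_{[L]}/\overline{T_{[L]}C}$. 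In both cases of the corollary the codimension $c$ is chosen so that $c-1$ equals the rank $r^+$ of the upper summand $N^+$ in the decomposition $N_{C/\gG}=N^+\oplus N^-$ from Proposition~(ii), and the corresponding degree drop from $N_{C/\gG}$ to the claimed perfectly balanced bundle likewise equals $c-1$.

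Next I will identify the surjection $N_{C/\gG}|_{[L]}\twoheadrightarrow\tau|_{[L]}$ in linear-algebra terms. With $\ell_0\subset L$ the $1$-dimensional subspace corresponding to $p$, $\psi$ a tangent direction of $C$ at $[L]$, and $\phi\in\Hom(L,W/L)/T_{[L]}C\cong N_{C/\gG}|_{[L]}$, the map sends $\phi\mapsto\phi(\ell_0)\bmod((L+V)/L+\kk\psi(\ell_0))$. As $V$ varies (through $p$, resp.\ containing $L$), the subspace $(L+V)/L\subset W/L$ sweeps out a Zariski-open subset of an appropriate Grassmannian. If for generic $V$ the restriction of this surjection to the fibre $N^+|_{[L]}$ is a fibrewise isomorphism onto $\tau|_{[L]}$, then the subsheaf $\ker(N^+\to\tau)$ is the kernel of a fibrewise-invertible surjection $r^+\O((n+3)/2)\twoheadrightarrow r^+\kk_{[L]}$, hence equals $r^+\O((n+1)/2)$. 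One then checks, via the projection $N\to N^-$, that there is a short exact sequence
\[
0\to r^+\O((n+1)/2)\to N_{\tilde C/\tilde\gG}\to N^-\to 0,
\]
and since $N^-$ is itself perfectly balanced of slope $(n+1)/2$ (resp.\ $n/2$), the balanced-extension lemma from the preliminaries yields $N_{\tilde C/\tilde\gG}=(2n-3)\O((n+1)/2)$ (resp.\ $(2n-3)\O(n/2)$).

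The main obstacle is the genericity assertion. A dimension count in the Grassmannian of $(c-1)$-dimensional quotients of the fixed $(n-2)$-dimensional space $W/L/\kk\psi(\ell_0)$ reduces the statement to the fibrewise injectivity of the evaluation map $N^+|_{[L]}\to W/L/\kk\psi(\ell_0)$, a property of $(C,[L],p)$ alone and independent of $V$. I will establish this injectivity by specialising $C_{n-1}$ to a reducible curve $C_{n-3}\cup C_2$ of the form used in the proof of Proposition~(i), where the position of $N^+$ in the normal bundle can be analysed componentwise by induction, and then invoking semicontinuity. The even case is handled analogously, with the role of $\Sigma$ played by the family of lines in $V\supset L$ through $p$.
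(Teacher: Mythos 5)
Your framework --- the blowup replaces $N=N_{C_{n-1}/\gG}$ by the colength-$(c-1)$ elementary down modification at $[L]$, and perfect balancedness follows once the induced map on the fibre of the upper subbundle $N^+$ is an isomorphism onto $\tau$ --- is correct and is in substance the paper's argument. In the odd case your reduction is sound: the injectivity of $\phi\mapsto\phi(\ell_0)\bmod\kk\psi(\ell_0)$ on $N^+|_{[L]}$ is precisely the statement that the evaluation map $f_{n-1}\colon \ul S_{n-1}\to\P^n$ (from the total space of scrolls through $(n+3)/2$ fixed rulings) is unramified at a general point, i.e.\ part (iii) of the Proposition, which the paper invokes directly in the form ``$V$ meets $f_{n-1}(\ul S_{n-1})$ transversely in a single point.'' Your plan to re-derive this by specializing to $C_{n-3}\cup C_2$ reproduces the induction the paper already uses to prove (iii), so for $n$ odd the two proofs are the same argument in different clothing; citing (iii) would shorten yours.

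The genuine gap is the even case, which is not ``analogous'' in the sense you assert. There the upper rank is $3n/2-3$, while $\dim\bigl((W/L)/\kk\psi(\ell_0)\bigr)=n-2<3n/2-3$ for every even $n\geq 4$, so the evaluation map $N^+|_{[L]}\to (W/L)/\kk\psi(\ell_0)$ can \emph{never} be injective, and the property you isolate as ``a property of $(C,[L],p)$ alone'' is false. The reason is that $\Sigma$ is no longer of the form ``lines meeting a linear space'': here $T_{[L]}\Sigma=\{\phi:\phi(\ell_0)=0,\ \phi(L)\subset \tilde V/L\}$, so $\tau$ is not a quotient of $\Hom(\ell_0,W/L)$. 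Writing $K_0=N^+|_{[L]}\cap\overline{\Hom(L/\ell_0,W/L)}$, one has $\dim K_0\geq n/2-1$, and what is actually needed is that equality holds, i.e.\ that the evaluation map $N^+|_{[L]}\to (W/L)/\kk\psi(\ell_0)$ be \emph{surjective}; once that holds, $K_0$ is identified with a fixed $(n/2-1)$-dimensional subspace of $W/L$ and generality of the $n/2$-dimensional subspace $\tilde V/L$ finishes the job. Surjectivity of that evaluation map is exactly the even half of Proposition (iii), namely that $f_{n-1}$ is dominant onto $\P^n$ with general fibre of dimension $n/2-1$ --- which is how the paper argues. So the strategy survives, but you must replace ``injectivity'' by ``surjectivity'' and prove that statement (or cite (iii)); the dimension count you set up for $n$ odd does not transfer.
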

 \begin{proof}
 	$n$ odd: we may assume $V$ meets the image  $f_{n-1}(\ul S_{n-1})$ transversely in a 
 	single point. Hence
 	deformations of $C_{n-1}$ preserving the $(n+3)/2$ rulings and incidence to $V$
 	correspond to a subsheaf $(2n-3)\O(-1)$ of the normal bundle which implies the assertion.\par
 	$n$ even: a general fibre of $f_{n-1}$ is a $(n/2-1)-$ dimensional subvariety of the $\P^{n-1}$ of lines
 	through a point $p$, hence 
 	$f_{n-1}(S_{n-1})$ meets $V$ transversely in just one line through $p$
 	so we can conclude as above.
 	\end{proof}

\section{Balanced surface scrolls}
Our goal is to prove
\begin{thm}\label{k=1}
	(i) Assume $n=2n_0+1$. Then $\gG(1, n)$ contains a balanced curve of degree $e$
	provided there exist $e_0, e_1, e_2$ such that $e=e_0+e_1+(n-2)e_2$ and moreover 
	$e_0\geq n_0$ and \eqref{odd-cond1}, \eqref{odd-cond2} and \eqref{odd-cond3} hold.\par
	(ii) Assume $n=4n_1$. Then $\gG(1, n)$ contains a balanced curve of degree $e$
	provided there exist $e_0, e_1$ such that there exists a balanced curve
	of degree $e_0$ in $\gG(1, n_1+1)$ and moreover $e=e_0+(n-2)e_1$ and  
 \eqref{match4} holds.\par
 (iii) Assume $n=4n_1+2$. Then $\gG(1, n)$ contains a balanced curve of degree $e$
 provided there exist $e_0, e_1, e_2$ such that $e=e_0+e_1+(n-2)e_2$ and moreover 
 $e_0\geq n_0$ and \eqref{c2-balance} holds.\par
 
	\end{thm}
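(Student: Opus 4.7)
The plan is to apply the ``balanced plus perfect'' degeneration strategy sketched in the introduction uniformly to all three parts, varying only the choice of Schubert cycle $\Sigma \subset \gG=\gG(1,n)$ and of limit curve. In each case I would form the standard degeneration of \S\ref{super},
\[\pi: B_{\Sigma\times 0}(\gG\times\A^1)\to\A^1,\]
whose central fibre is $\fF_1\cup_E \fF_2$ with $\fF_1=B_\Sigma \gG$ and $\fF_2=\P(N_{\Sigma/\gG}\oplus\O)$. The goal is to build a genus-zero nodal curve $\bigcup_j C_j$ of total degree $e$ in this central fibre, with components lying in $\fF_1$ or $\fF_2$, so that each $C_j$ is balanced in its ambient smooth component, at least one is perfect, and the vertical and horizontal slopes satisfy $[s_v]=[s_h]$ at each node. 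Then the balanced-extensions principle of the preliminaries yields a balanced normal bundle for $\bigcup C_j$, and a generic smoothing gives the required balanced rational curve of degree $e$ in $\gG$.

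For part (i), with $n=2n_0+1$, I would take $\Sigma$ to be the codimension-$n_0$ Schubert cycle of lines meeting a general $\P^{n_0}$. The decomposition $e=e_0+e_1+(n-2)e_2$ encodes a three-component limit $C_0\cup C_1\cup C_2$: the component $C_0\subset \fF_1$ of degree $e_0$ is the strict transform of a minimal-scroll curve $C_{n-1}$ through $n_0+1$ general rulings, which is perfectly balanced by Corollary \ref{perfect-cor} (the hypothesis $e_0\ge n_0$ giving the attaching room along $E$); the component $C_2$ lies in the $(n-2)$-dimensional fibres of $\fF_2\to\Sigma$, accounting for the coefficient $(n-2)$; and $C_1$ realises the remaining degree as a section-type curve in $\fF_2$. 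The three conditions \eqref{odd-cond1}--\eqref{odd-cond3} then express exactly the balance of $C_1$ and $C_2$ via Lemma \ref{balancing-lem} and Corollary \ref{kernel} applied to the normal-bundle filtration of Lemma \ref{normal}, together with the slope-matching at the two nodes.

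For parts (ii) and (iii) I would replace $\Sigma$ by a sub-Grassmannian of lines in a fixed linear subspace of $\P^n$, so that $\fF_2=\P(N_{\Sigma/\gG}\oplus\O)$ retains a residual tautological structure via Lemma \ref{normal}. In (ii), the hypothesis of a balanced curve of degree $e_0$ in $\gG(1,n_1+1)$ provides the curve inside the sub-Grassmannian sitting in $\fF_1$, whose balance pulls back across the blowup, and \eqref{match4} is the single slope-matching inequality with the complementary curve in $\fF_2$. Part (iii) combines the three-component structure of (i) with the sub-Grassmannian $\Sigma$ of (ii), with \eqref{c2-balance} encoding the resulting balance-and-matching data.

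The main obstacle will be the simultaneous verification of balance and slope-matching in each case. Balance of each $C_j$ in its ambient smooth component is to be established by repeated application of Lemmas \ref{balancing-lem}--\ref{balanced-plus-trivial} and Corollary \ref{kernel} to the exact sequences arising from the graded pieces of Lemma \ref{normal}; the matching conditions $[s_v]=[s_h]$ at each node then translate, via the same filtration, into the displayed numerical inequalities in the theorem. A secondary bookkeeping task, which I expect to be routine but tedious, is to verify that the slope of each $C_j$ in its ambient component agrees with the slope computed from the corresponding decomposition of $e$, so that the arithmetic decomposition in the hypothesis really does match the geometry of the limit curve.
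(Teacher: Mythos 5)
Your high-level strategy (standard degeneration to $\fF_1\cup_E\fF_2$, a perfect piece plus a balanced piece, slope matching, smoothing) is the paper's, but the architecture of the limit curve you describe does not match the one the theorem's numerical conditions are written for, and this is where the proof actually lives. In the paper, for all three parts the $\fF_1$-side is \emph{not} a single component of degree $e_0$: it is $e_2$ (resp.\ $e_1$ in part (ii)) disjoint translates of the perfect birational transform of a \emph{minimal} curve $C_{n-1}$ of degree $n-1$ (Corollary \ref{perfect-cor}), each meeting $E$ in exactly one point. The coefficient $n-2$ in $e=e_0+e_1+(n-2)e_2$ is $\deg(C_{1,0})-1=(n-1)-1$, i.e.\ the degree each attached minimal curve contributes after discounting the node; it is not the fibre dimension of $\fF_2\to\Sigma$ (which is $n_0$ in part (i), $3n_1-1$ in part (ii)). The entire triple $(e_0,e_1,e_2)$ with $e_2=e_+$ instead parametrizes a \emph{single} curve $C_+\subset\fF_2$ built by a tower of liftings through the resolution $\tilde\Sigma$: a general rational curve $C_0\subset\P^{n_0}$ of degree $e_0$ (balanced precisely because $e_0\geq n_0$ --- this hypothesis has nothing to do with ``attaching room along $E$''), lifted to $C_{01}\subset\tilde\Sigma=\P(Q^*_{n_0}\oplus(n_0+1)\O)$ via a surjection onto $\O(e_1)$, then lifted to $C_+\subset\P(N\oplus\O)$ meeting $E$ in $e_+$ points. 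The displayed conditions are then exactly: \eqref{odd-cond1} makes the kernel of the first surjection balanced (Corollary \ref{kernel}), \eqref{odd-cond2} is the vertical/horizontal slope match for $C_{01}$, and \eqref{odd-cond3} is the slope match for $C_+$. With the curve you propose, there is no reason these particular formulas would arise, so the ``routine bookkeeping'' you defer cannot close.

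Part (ii) has the same problem in a sharper form: you place the balanced degree-$e_0$ curve of $\gG(1,n_1+1)$ inside $\fF_1$, whereas in the paper $C_0\subset\Sigma=\gG(1,n_1+1)$ is the \emph{base} of the lifting into $\fF_2=\P(N^*_{\Sigma/\gG}\oplus\O)$, which fibres over $\Sigma$; the curves in $\fF_1$ are again the perfect transforms of minimal curves. Swapping the roles of $\fF_1$ and $\fF_2$ matters because the whole point of the construction is that the pieces meeting $E$ in a single point each must be \emph{perfect} (so that attaching them acts like a controlled elementary modification under smoothing), while the piece meeting $E$ in many points need only be balanced. Finally, reducing the smoothing step to ``the balanced-extensions principle'' elides this: the extension lemma is used \emph{within} each component to combine vertical and horizontal normal bundles, while the passage from the nodal curve in $\fF_1\cup\fF_2$ to a balanced curve in $\gG$ rests on the perfect-tail smoothing mechanism, which your outline does not invoke.
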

\subsection{Degeneration}\label{degeneration}
The idea is to study curves in $\gG=\gG(1, n)=G(2, n+1)=G(n-1, n+1)$ by 
a standard degeneration as in \S \ref{super}
\[\mathcal G=B_{\Sigma\times 0}(\gG\times\A^1)\stackrel{\pi}{\to}\A^1\]
for a suitable Schubert cycle $\Sigma\subset\gG$. Via $\mathcal G$,  $\pi\inv(1)=\gG$ 
degenerates to $\pi\inv(0)=\fF_1\cup_E\fF_2$ with 
\[\fF_1=B_\Sigma \gG, 
E=\P(N^*_{\Sigma/\gG}), \fF_2=\P(N^*_{\Sigma/\gG}\oplus\O).\]
We then construct a curve 
\[C_1\cup C_2=\bigcup\limits_{i=1}^\l C_{1, i}\cup C_2\subset\fF_1\cup\fF_2\]
comprised of a balanced curve $C_2$ in $\fF_2$ meeting $E$ in $\l$ points together with $\l$
translates $C_{1, 1},...,C_{1, \l}$ of  a perfect curve $C_{10}$ in $\fF_1$ meeting $E$ in 1 point.
Then $C_1\cup C_2$ smooths out to a 
balanced rational curve of degree $e=\l(\deg(C_{1,0}-1)+\deg(C_2)$ in $\gG$.
In practive we will take $C_{10}$ minimal, hence of degree $n-1$, hence
$e=\l(\deg(n-2)+\deg(C_2)$

\par 
\subsection{Case $n$ odd}\label{odd-sec}
We begin with the case $n=2n_0+1$ odd. We use Corollary \ref{perfect-cor}, and accordingly
we take for $\fF_1$ the blowup of $\gG$ in the subvariety 
$\Sigma=\Sigma(n_0, 0)\subset \gG$ consisting of lines meeting $\P^{n_0}$.
$\Sigma$ is the birational image of
\[
\tilde\Sigma=\{(p, L):p\in L\cap\P^{n_0}\}\subset\P^{n_0}\times\gG.\]
Moreover the  exceptional locus of $\tilde\Sigma\to\Sigma$ is
 $\gG(1, n_0)$ which has codimension $2(n_0+1)$ in $\gG$. 
Then Corollary \ref{perfect-cor} yields that the
birational transform of $C_1$ the blowup $\fF_1$ is indeed a perfect curve.\par
Now we study $\tilde\Sigma$ to construct the curve $C_2$.
Note that via the map $\tilde\Sigma\to\P^{n_0}$, we can identify
\[\tilde\Sigma\simeq \P(Q^*_n|_{\P^{n_0}})=\P(Q^*_{n_0}\oplus(n_0+1)\O)\]
where $Q_x$ denotes the respective quotient bundles on $\P^x$.
To construct a curve in $\tilde\Sigma$ we start with a 
general, hence balanced rational curve $C_0\subset\P^{n_0}$
of degree $e_0\geq n_0$, hence slope 
\[\frac{e_0(n_0+1)-2}{n_0-1}=e_0+2\frac{e_0-1}{n_0-1}=
\frac{e_0(n+1)-4}{n-3}=e_0+4\frac{e_0-1}{n-3}.\] 
Note $C_0$ corresponds to a general inclusion over $\P^1$ $\O(-e_0)\to\O(n_0+1)\O$
whose cokernel, i.e. $Q_{n_0}|_{C_0}$ is balanced by \cite{caudatenormal}, Lemma 26.
Then we lift
$C_0$ to a curve $C_{01}\subset\tilde\Sigma$ of degree $e_1$ via a general surjection
\[\phi: Q^*_{n_0}|_{C_0}\oplus(n_0+1)\O\to\O(e_1).\]
Setting $K=\ker(\phi)$, the vertical normal bundle of $C_{01}$ is $K^*(e_1)$.
Because $Q_{n_0}$ is balanced of slope $e_0/n_0$ it follows from Corollary \ref{kernel}
that $K$ is balanced provided
\eqspl{odd-cond1}{
e_1\geq e_0.	
}
\par/************
**********/\par
Therefore $C_{01}$ will have balanced normal bundle in $\tilde\Sigma$ provided
\eqspl{odd-cond2}{
e_0+[2(e_0-1)/(n_0-1)]=e_1+[(e_0+e_1)/2n_0].	
}
Finally lifting $C_{01}$ to $C_+=C_2\to\fF'_2=\P(N^*\oplus\O))$,
where $N=(n_0+1)\O(e_0)=N_{\tilde\Sigma\to\gG}|_{C_{01}}$
amounts to a surjection
\[\phi_+:(n_0+1)\O(e_0)\oplus\O\to\O(e_+)\]
and letting $K_+=\ker(\phi_+)$, the vertical normal bundle for $C_+$ over $\tilde\Sigma$
is $K^*_+(e_+)$. Now letting $\tau_+$ denote the cokernel of the general injection
$\O\to\O(e_+)$ induced by $\phi_+$, $K_2$ is also the kernel of a general surjection
$(n_0+1)\O(e_0)\to \tau_+$, hence balanced. Clearly the slope of $K_+(e_+)$ is
$e_0+e_++e_+/(n_0+1)$. Therefore the normal bundle to $C_+\to\fF'_2$ will be balanced provided 
\eqref{odd-cond1} and \eqref{odd-cond2} hold and moreover
\eqspl{odd-cond3}{
e_0+[2(e_0-1)/(n_0-1)]=e_0+e_++[e_+/(n_0+1)].	
}
Furthermore, by choosing $C_+$ generally, it will be disjoint from the exceptional locus
of $\fF'_2\to\fF_2$, so $N_{C_+/\fF'_2}=N_{C_+/\fF_2}$, so $C_+\to\fF_2$ is balanced
as well.
Then $\bigcup\limits_{i=1}^\l C_{1i}\cup C_+$ will be balanced of degree
$e=e_0+e_1+(n-2)e_+$. This proves Theorem \ref{k=1} (i).\par
Note that taking $2n_0(n_0+1)|e_1, (n_0+1)|e_2$ we get infinitely many degrees
$e$ (in fact some arithmetic progressions) where
\eqref{odd-cond1}, \eqref{odd-cond2} and \eqref{odd-cond3} hold.\par
\subsection{Case $4|n$}\label{4n_1} Write $n=4n_1$. Set
\[\Sigma=\gG(1, n_1+1)\subset\gG.\]
Let $C_0\to\Sigma$ be a general, balanced curve of degree $e_0$. 
Then $N_{C_0/\Sigma}$ is balanced of slope $((n_1+2)e_0-2)/(2n_1-1)$.
Note that
\[N:=N_{\Sigma\to\gG}=(3n_1-1)S_2^*\]
which is balanced of slope $e_0/2$. Then a general lifting of $C_0$ to 
$C_+\to \fF_2=\P(N^*\oplus\O)$
amounts to a general surjection $N^*\oplus\O\to\O(e_+)$, i.e. 
\[N^*\to\bigoplus\limits_1^{e_+}\kk_{p_i}\]
whose kernel $K$ will be balanced of slope $-e_0/2-e_+/2(3n_1-1)$.
Then the vertical normal bundle $K^*(e_1)$ will be balanced of slope $e_0/2+e_+(1+1/(6n_1-2)$.
The slope matching matching condition takes the form
\eqspl{match4}{
e_++[\frac{e_+}{6n_1-2}]=[\frac{5e_0-4}{4n_1-2}]	
}
Choosing $e_0$ of the form $e_0=k(4n_1-2)$, then writing
\[5k-1=a(6n_1-2)+b, 0\leq b<5k-1,\]
equation \eqref{match4} holds if we take $e_+=(a-1)(6n_1-2)+b$ which yields infinitely
many values of $e=e_0+(n-2)e_+$. In fact taking for $k$ a multiple of $6n_1-2$,
we get for $e_0$ and $e_+$ an arithmetic progression of difference $(4n_1-2)(6n_1-2)$.
Therefore, provided there exists a balanced rational curve of degree $e_0$ in
$\gG(1, n_1+1)$ (see \S \ref{odd-sec}), 
there will exist a balanced rational curve of degree $e$ in $\gG$.
\subsection{Case $4|(n-2)$}\label{4n_1+2}
 Write $n=4n_1+2$, and set
\[\tilde\Sigma= \{L_0\subset L_1\}\subset\P^{n_1}\times\gG(1, n_1+2).\]
The map $\tilde\Sigma\to\gG$ is birational onto the $(2n_1+1)$- dimensional Schubert cycle
\[\Sigma=\{L_1:L_1\cap\P^{n_1}\neq\emptyset\}.\]
Via the map $\tilde\Sigma \to \P^{n_1}$,  
$\tilde\Sigma$ can be identified with the projective bundle
$\P(Q_{n_1}^*\oplus 2\O)$. 
Given a general rational curve $C_0\subset\P^{n_1}$, a general lifting $C_0$ to
$C_{01}\subset\tilde\Sigma$ amounts to a general surjection $Q^*_{n_1}\oplus 2\O\to\O(e_1)$.
By Lemma \ref{balanced-plus-trivial}, 
the kernel $K$ of this surjection will be balanced provided $e_1\geq e_0/n_1$. Moreover
The vertical normal bundle of $C_{01}$ is $K^*(e_1)$ which has slope 
$\frac{n_1+2}{n_1+1}e_1+\frac{e_0}{n_1+1}$.
Then $C_{01}\to\tilde\Sigma$ will be balanced provided
\[[\frac{n_1+2}{n_1+1}e_1]=e_0+[\frac{2e_0-2}{n_1-1}]-[\frac{e_0}{n_1+1}].\]
Now the pullback of the rank-2 tautological subbundle (or
dual of the tautological quotient bundle) to $\tilde\Sigma$
fits in an exact sequence
\[\exseq{\pi^*(\O(-1))}{S}{\O_{\tilde\Sigma}(-1)}\]
 and the normal bundle
$N=N_{\tilde\Sigma\to\gG}$ restricted on $C_{01}$ 
may be identified as 
\[N|_{C_0}=3n_1S_{C_{01}}^*\oplus\O(e_0+e_1).\] Then lifting $C_{01}$ to
$C_+\to\fF_2=\P(N_{\tilde\Sigma\to\gG}\oplus\O))$ amounts to a surjection
$(N\oplus\O)_{C_{01}}\to\O(e_+)$ or equivalently, an exact sequence on $C_{01}$
\[\exseq{K_2}{3n_1S_{C_{01}}\oplus \O}{ \bigoplus\limits_{i=1}^{e_0+e_1+e_+}k_{p_i}}.\]
The vertical normal bundle of $C_+$ then coincides with $K^*(e_+)$ and has slope
$e_++(e_++3n_1(e_1+e_0)/(6n_1+1)$ so $C_+$ will be balanced provided
\eqspl{c2-balance}{
[\frac{n_1+2}{n_1+1}e_1]+[\frac{e_0}{n_1+1}]=e_0+[\frac{2e_0-2}{n_1-1}]
=e_++[(e_++3n_1(e_1+e_+))/(6n_1+1)]	
}
This again yields balanced curves in $\gG$ of degree $e=e_0+e_1+(n-2)e_+$.

\section{Higher dimensions}
\subsection{minimal scrolls}
Let $\gG=\gG(k,n)$. As is well known, a minimal-degree curve $C_{k,n}\subset\gG$ 
corresponds to a minimal-degree
scroll $X_{k,n}\subset\P^n$, which has degree $n-k$ if $2k+1\leq n$ and $k+1$
if $2k+1\geq n$. We will assume $2k+1\leq n$.
\begin{prop}
	$C_{k,n}\subset\gG$ is balanced.
	\end{prop}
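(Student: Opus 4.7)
The plan is to compute $N = N_{C/\gG}$ directly via the tangent sequence $0 \to T_C \to T_\gG|_C \to N \to 0$, exploiting the explicit splittings of the tautological bundles on the minimal curve.

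Write $n - k = m(k+1) + r$ with $0 \leq r < k+1$; the hypothesis $n \geq 2k+1$ gives $m \geq 1$. The minimal curve $C = C_{k,n}$ is defined by the full evaluation surjection $(n+1)\O \to Q|_C$, where
\[Q|_C = (k+1-r)\O(m) \oplus r\O(m+1)\]
is balanced with $h^0(Q|_C) = n+1$. Since this evaluation is block-diagonal with respect to the above splitting, its kernel decomposes as a direct sum of the kernels of the per-summand evaluations $(a+1)\O \to \O(a)$ for $a \in \{m, m+1\}$, each of which is $a\O(-1)$; assembling yields $S|_C = (n-k)\O(-1)$, perfectly balanced. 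Consequently
\[T_\gG|_C = S^*|_C \otimes Q|_C = (n-k)\O(1)\otimes Q|_C = (n-k)(k+1-r)\O(m+1) \oplus (n-k)r\O(m+2)\]
is itself balanced, with smallest summand degree $m+1 \geq 2$.

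Twisting the tangent sequence by $\O(-2)$ produces $0 \to \O \to T_\gG|_C(-2) \to N(-2) \to 0$ whose middle term is globally generated and balanced. To conclude that $N$ is balanced it suffices to show that the Kodaira-Spencer section $df \in H^0(T_\gG|_C(-2))$ is sufficiently general in the sense of the balancing lemmas (Lemma \ref{balancing-lem}, Lemma \ref{balanced-plus-trivial}).

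The main obstacle is verifying this genericity of $df$. My approach is to describe $df$ explicitly as a map $S|_C(2) \to Q|_C$, obtained by differentiating the inclusion $S|_C \hookrightarrow (n+1)\O$ and projecting to $Q|_C$. Because the underlying evaluation is block-diagonal, so is $df$: it splits as a direct sum of block maps $a\O(1) \to \O(a)$, each of which is easily checked to be surjective from the explicit matrix form of $S|_C \hookrightarrow (n+1)\O$. Consequently $df$ has nonzero image in each summand-type of $T_\gG|_C(-2) = (n-k)(k+1-r)\O(m-1) \oplus (n-k)r\O(m)$, from which the resulting quotient can be shown to be balanced either via Lemma \ref{balanced-plus-trivial} or by a direct normalization using $\Aut(T_\gG|_C(-2))$. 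As a fallback, one may invoke $\PGL(n+1)$-transitivity on marked minimal scrolls to reduce to a generic section, or proceed inductively on $k$ in the style of the $k=1$ case by degenerating $C_{k,n}$ to a reducible curve and invoking the balanced-extension machinery of the preliminary lemmas.
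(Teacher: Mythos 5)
Your route---computing $N=N_{C_{k,n}/\gG}$ directly from $0\to T_C\to T_\gG|_C\to N\to 0$---is genuinely different from the paper's, which computes the base case $n=2k+1$ explicitly as $(V^*\otimes V/\kk 1_V)\otimes\O(2)$ and then handles $n\geq 2k+2$ by induction, degenerating $C_{k,n}$ to $C_{n-2k-1}\cup C_{k+1}$ and smoothing to get a curve through $q+1$ general points (interpolation forces balancedness). Your preliminary computations are correct: with $n-k=m(k+1)+r$ one does get $Q|_C=(k+1-r)\O(m)\oplus r\O(m+1)$, $S|_C=(n-k)\O(-1)$, and $T_\gG|_C$ balanced with summands in two consecutive degrees. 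The problem is entirely in the last step.

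The gap: knowing that $df$ has nonzero image in each summand-type of $T_\gG|_C(-2)$ is not remotely sufficient to conclude that its cokernel is balanced. Concretely, the section $(x^2,\,y^2,\,x^2+y^2)$ of $3\O(2)$ is nowhere vanishing and has nonzero component in every summand, yet its cokernel is $\O(2)\oplus\O(4)$ rather than $2\O(3)$, because the three forms are linearly dependent; the cokernel of $\O\hookrightarrow A\O(c)\oplus B\O(c+1)$ is balanced only under a precise maximal-rank condition on the multiplication maps induced by the components of the section, and that is exactly what you would have to verify for $df$. Lemmas \ref{balancing-lem} and \ref{balanced-plus-trivial} cannot be invoked: they concern \emph{sufficiently general} maps, whereas $df$ is a single completely determined section---this is precisely why normal bundles of rational curves are delicate, and why $T_X|_C$ balanced does not imply $N_{C/X}$ balanced. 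The $\PGL(n+1)$-transitivity fallback does not repair this: transitivity on minimal scrolls says all the sections $df$ lie in one $\Aut(T_\gG|_C)$-orbit, so there is no ``generic'' section to reduce to and one must still compute that one orbit. (Already for $k=0$ the needed non-degeneracy is that the components of $df$ span $H^0(\O(n-1))$---true, but an explicit computation, not a genericity statement.) Your final fallback, degeneration plus the balanced-extension machinery, is essentially the paper's actual proof, but as written it is only a gesture; so the argument does not close as it stands.
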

\begin{proof}
	We begin with the base case $n=2k+1$. In this case $X=X_{k,n}=\P(V\otimes\O_{\P^1}(1)), 
	V=\kk^{k+1}$. The tautological 
	exact sequence on $\gG$ restricts on $C_{k,n}$ to
	\[\exseq{V(-1)}{V\otimes H^0(\O(1))}{V(1)},\]
hence the tautological	subbundle $S_{k+1}|_{C_{k,n}}=V\otimes\O(-1)$while the tautological
		quotient bundle $Q_n|_{C_{k,n}}=V\otimes\O(1)$ so that
		$T_{\gG}|_{C_{k,n}}=V^*\otimes V\otimes\O(2)$ and the inclusion
		$T_{C_{k,n}}=\O(2)\to T_{\gG}|_{C_{k,n}}$ corresponds to te inclusion
	$\kk1_V\to V^*\otimes V$ and consequently
	\[N_{X/\gG}=(V^*\otimes V/\kk1_v)\otimes\O(2)\]
	which is evidently perfectly balanced of slope 2. 
	Thus given 3 $\P^k$s in $\P^{2k+1}$, there are finitely many
	minimal scrolls containing them as rulings.\par
	Now assume $n\geq 2k+2$.  Write 
	\[n+1=q(k+1)+r, q\geq 2, 0\leq r<k+1.\] 
Because $N_{X_{k,n}/\gG}$ has degree $(n-k)(n+1)-2$ and rank $(n-k)(k+1)-1$,
	an elementary calculation shows that $q$ is the slope of $N_{X_{k,n}/\gG}$.
	We need to show that given general
	$L_1,...,L_{q+1}\in \gG$ there is a minimal scroll containing them as rulings,
	or equivalently, a $C_{n-k}\subset\gG$ through $[L_1],...,[L_{q+1}]$.
	Let $A=\carets{L_1,...,L_{q-1}}$ and let $B$ be a general 
	 $\P^{n-k-1}$ containing $A$. Let $L_0$ be a general $\P^k$
	 contained in $B\cap\carets{L_q, L_{q+1}}$. By the $n=2k+1$ case above, there exists a 
		$C_{k+1}$ in $\gG(1,\carets{L_q, L_{q+1}})$ through  $L_0, L_q, L_{q+1}$. 
		By induction, there is a minimal scroll in $B$ containing $L_0, L_1,...,L_{q-1}$
		as rulings, corresponding to a minimal curve $C_{n-k-1}\subset\gG(k, \P^{n-k-1})$.
		Then $C_{n-2k-1}\cup C_{k+1}$ smooths out to a $C_{n-k}\subset\gG$ 
		through $q+1$ general points. Therefore $C_{k,n}$ is balanced.
		
	\end{proof}
\begin{cor}\label{remainder}
	Let $R=R(k,n)$ be the remainder of $(n-k)(n+1)-2$ divided by $(n-k)(k+1)-1, k<n/2$.
	Let $\Sigma\subset\gG$ be a sufficiently general Schubert cycle
	of codimension $A=R+1$ meeting $C_{k,n}$
	at a point. Then the birational transform of $C_{k,n}$ in $B_\Sigma\gG$
	is perfect.
	\end{cor}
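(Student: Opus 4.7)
By the preceding proposition, $C := C_{k,n}$ is balanced in $\gG$. Writing $\rho := \rk N_{C/\gG} = (n-k)(k+1)-1$ and using $\deg N_{C/\gG} = q\rho + R$, balancedness gives
\[
N := N_{C/\gG} \;\cong\; R\cdot\O(q+1)\,\oplus\,(\rho-R)\cdot\O(q),
\]
so the unbalanced degree in the sense of Lemma \ref{balancing-lem} is $u(N) = R$. To prove perfectness, it suffices to show that the elementary transformation induced by the blowup along $\Sigma$ removes exactly these $R$ ``excess'' units.

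Let $\pi:\tilde\gG := B_\Sigma\gG \to \gG$ be the blowup, with exceptional divisor $E = \P(N^*_{\Sigma/\gG})$. For sufficiently general $\Sigma$, the intersection $\Sigma \cap C$ is a single transverse point $p$, and the birational transform $\tilde C$ is isomorphic to $C$ under $\pi$, meeting $E$ only at the point $\tilde p$ determined by the direction $T_pC \subset N_{\Sigma/\gG,p}$. The standard normal-bundle formula for strict transforms under blowing up a smooth center of codimension $R+1$ (compatible with $K_{\tilde\gG} = \pi^*K_\gG + R\cdot E$ and $\tilde C \cdot E = 1$, which already forces $\deg N_{\tilde C/\tilde\gG} = \rho q$) then yields an elementary modification
\[
0 \to N_{\tilde C/\tilde\gG} \to N \to \tau \to 0
\]
where $\tau$ is a skyscraper at $p$ of length $R$, and the surjection $N_p \twoheadrightarrow \tau_p \cong \kk^R$ is identified with the natural quotient $N_{C/\gG,p} \twoheadrightarrow N_{\Sigma/\gG,p}/\langle T_pC\rangle$.

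It remains to show that for sufficiently general $\Sigma$ this elementary modification is generic enough to produce a perfect kernel. Concretely, one wants the kernel of the surjection $N_p \to \kk^R$, namely $(T_p\Sigma + T_pC)/T_pC \subset N_{C/\gG,p}$, to be complementary to the fiber at $p$ of the top-slope summand $R\cdot\O(q+1)\subset N$. Assuming this, a direct construction---combining the inclusion $\O(q)\hookrightarrow\O(q+1)$ given by a section of $\O(1)$ vanishing at $p$, the identity on the $(\rho-R)\cdot\O(q)$ summand, and correction terms from the linear map $\kk^{\rho-R}\to\kk^R$ extracted from the above surjection---exhibits an isomorphism $\rho\cdot\O(q)\simto N_{\tilde C/\tilde\gG}$, so $\tilde C$ is perfect. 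The main obstacle is establishing this genericity of $T_p\Sigma$: we need the family of codimension-$(R+1)$ Schubert cycles meeting $C$ at $p$ (flag varying) to sweep out $T_p\Sigma$'s in sufficiently general position relative to the fixed subspace $R\cdot\O(q+1)|_p\subset N_{C/\gG,p}$. For the sub-Grassmannian-type Schubert cycles emphasized elsewhere in the paper, this should follow from the explicit normal-bundle filtration of Lemma \ref{normal}.
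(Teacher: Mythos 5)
Your route is the same as the paper's: use balancedness of $C_{k,n}$ to write $N=R\,\O(q+1)\oplus(\rho-R)\O(q)$, observe that blowing up a codimension-$(R+1)$ center meeting $C$ transversely at one point effects an elementary down modification of $N$ of colength $R$ whose kernel at $p$ is $(T_p\Sigma+T_pC)/T_pC$, and conclude perfectness once that kernel is complementary to the fibre of the upper subbundle $R\,\O(q+1)$. All of your bookkeeping (the degree count via $K_{\tilde\gG}=\pi^*K_\gG+RE$, the identification $u(N)=R$, the splitting of the modified bundle as $\rho\,\O(q)$ once the quotient map is an isomorphism on the upper subspace) is correct and consistent with what the paper does.

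However, the step you defer --- ``the main obstacle is establishing this genericity of $T_p\Sigma$'' --- is not a technical afterthought: it is the entire content of the corollary, and ``this should follow from the explicit normal-bundle filtration of Lemma \ref{normal}'' is not an argument. The paper closes this gap with a specific geometric observation: at a general point $[L]$ of $C_{k,n}$ the upper subspace of $N_{C_{k,n}/\gG}$ is the space of first-order deformations of $L$ inside a certain $(k+R)$-dimensional linear subspace $M\subset\P^n$ containing $L$; since the Schubert cycle $\Sigma(a.)$ is cut out by a flag that may be chosen in general position with respect to the pair $(L,M)$ subject only to the closed condition $L\in\Sigma(a.)$, the tangent space $T_L\Sigma(a.)$ ends up in general position with respect to $M$ modulo $L$, which is exactly the required transversality of the modification's kernel with the upper subspace. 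Without some such identification of the upper subspace in concrete projective terms (or an explicit computation of $T_L\Sigma$ against it, using Lemma \ref{normal}, as you suggest but do not carry out), your proof is incomplete at its decisive point; note also that one cannot simply invoke ``generic $\Sigma$'' in the abstract, since Schubert cycles of a fixed type form a single orbit only under the flag action, so the genericity available is precisely genericity of the flag, which is what must be exploited.
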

\begin{proof}
	If $L\in\gG$ is a general point of $C_{k,n}$, 
	the upper subspace of $N_{C_{k,n}}$ at $L$ corresponds to
	some $(k+R)$-dimensional subspace $M\subset\P^n$ containing $L$.
	We choose the standard flag $\P^0\subset...\subset\P^n$
	general in general position with respect to $L, M$,
	 subject to $L\in\Sigma(a.)$. Then $T_L\Sigma(a.$ and $M$ will be in general position
	modulo $L$. This makes the birational transform of $C_{k,n}$ perfect.
	\end{proof}
\begin{rem}
	Write
	\[n+1=q(k+1)+r, 0\leq r\leq k.\]
	We have $q\geq 2$ as $n\geq 2k+1$.\par
By an elementary calculation, we have
\eqspl{R(n,k)}{R=q-2+r(n-k).\qed}
\end{rem}

We now present some examples of perfect curves as birational transforms of $C=C_{k,n}$ in a blowup of $\gG$
in a suitable Schubert cycle $\Sigma(a.)$, as in Corollary \ref{remainder}.

\begin{example}\label{r=0} Assume $r=0$, i.e. $n+1=q(k+1)$. Then $R(k,n)=q-2$,
	so the upper subbundle of $N_{C_{k,n}/\gG}$ has rank $q-2$.
	If $q=2$ then $N_{C_{k,n}/\gG}$ is already perfect, so assume $q>2$.
	Then let $(a.)=(q-1, 0,...)$.
We take for $\Sigma$ the Schubert cycle
	\eqspl{sigma1}{\Sigma=\Sigma(a.)=\{L\in\gG: L\cap \P^{n-k-(q-1)}\neq\emptyset\}}
	which has codimension $q-1$ in $\gG$. Then as in Corollary
	\ref{remainder}, the birational transform $\tilde C$ in $B_\Sigma \gG$ of $C$ meeting $\Sigma$ 
	is perfect.\end{example}
\begin{example}\label{alphabeta-example}
	Assume we can write
	\eqspl{alphabeta}
	{ R+1= \alpha(n-k)+\beta(k+1)-\alpha\beta.
	}
	Then the Schubert cycle
	\[\Sigma=\gG(k-\alpha, n-\alpha-\beta)=\Sigma((n-k)^{(\alpha)}, \beta^{(k+1-\alpha)})\]
	has codimension $R+1$ so the birational transform of $C_{k,n}$ meeting $\Sigma$
	in $B_\Sigma\gG$ is perfect.\par
	A sufficient condition to have the above form for $R+1$ is 
	\[(k+1-r)|(q-1).\] Then we can take
	\[\alpha=r, \beta=\frac{q-1}{k+1-r}.\]
	A sufficient condition for \eqref{alphabeta} to hold with $\alpha=0$, i.e. $R+1=\beta(k+1)$
	is 
	\[q-1+r^2\equiv 0\mod k+1.\]\par
	Note that the normal bundle $N=N_{\Sigma/\gG}$ admits a filtration with graded pieces\nl
	$\alpha Q_{n-k-\beta}, \alpha\beta\O, \beta S^*_{k+1-\alpha}$.
	\end{example}
%
%

/******
%
************/
\subsection{Balanced rational curves in strict flag resolutions and Schubert supercycles}\label{c+}
Here we describe a construction for rational curves that will be applied below to construct balanced
rational curves in Grassmannians.
\subsubsection{Starting curve} In this section we think of Grassmannians as prametrizing vector subspaces. 
Notations as in \S \ref{schubert} with $A=\sum a_i=R+1$ we
start with a curve 
\[\P^1\to C_0\subset \Sigma_0=G(k_0+1, n-k-a_{k_0}+k_0+1)    \]
of degree $e_0$ (NB we allow the case where $\Sigma_0$ is a point,
i.e. $a_{k_0}=n-k$, in which case $e_0=0$, but in case $a_0<n-k$ we require
$C_0\to\Sigma_0$ to be an immersion so either $\Sigma_0\neq\P^1$ or $e_0=1$ ). 
Then $Q_0$ has degree $e_0$ on $C_0$, hence $C_0$ has slope 
\[s_0=(e_0(n-k-a_{k_0}+k_0+1)-2)/((n-k-a_{k_0})(k_0+1)-1).\]
If $\Sigma_0$ is a point, $s_0$ is deemed undefined.
By induction, these are infinitely many choices of $e_0$ (even all large enough $e_0$
if $k_0=0$), such that
 a general $C_0$ is balanced.\par
 \subsubsection{Lifting within $\tilde\Sigma$}
A general lifting of $C_0$ to $C_1\to\Sigma_1$ amounts to a general surjection over $C_0$,
\[F_1^*|_{C_0}\to S^{1*}_{k_1-k_0}|_{C_0}\] where $S^{1*}_{k_1-k_0}|_{C_0}$ is a given
bundle of degree $e_1$ which we assume perfect, so in particular 
\[(k_1-k_0)|e_1, S^{1*}_{k_1-k_0}|_{C_0}=(k_1-k_0)\O(e_1/(k_1-k_0)).\]
Assume first $C_0$ is an immersed curve.
Then the corresponding kernel $Q^1_{n-k-a_{k_1}}$
will be balanced and so the vertical normal bundle of $C_1$, i.e. $S^{1*}\otimes Q^1$
 will be balanced of slope
\eqspl{s1-slope}{s_1=e_1/(k_1-k_0)+(e_1+e_0)/(n-k-a_{k_1}).}
If $C_0$ is a point, the 'vertical normal bundle' is just the normal bundle
whose slope is
\eqspl{s1-slope0}{
s_1=\frac{e_0(n-k+k_1-k_0-a_{k_1})-2}{(k_1-k_0)(n-k-a_0)-1}.	
}
Note that even if $C_0$ is a point, $C_1$ will be an immersed curve.
Similarly in the general case  general lifting of $C_i$, assumed immersed,
 to $C_{i+1}\to\Sigma_{i+1}$ 
amounts to a general surjection over $C_i$,
$F_{i+1}^*|_{C_i}\to S^{(i+1)*}_{k_{i+1}-k_i}$ where $S^{(i+1)*}_{k_{i+1}-k_i}|_{C_i}$ is a given
bundle of degree $e_{i+1}$ which we assume perfect, so in particular 
\[(k_{i+1}-k_i)|e_{i+1}\] hence
\[S^{i+1}_{k_{i+1}-k_i}=(k_{i+1}-k_i)\O(e_{i+1}/(k_{i+1}-k_i)).\]
 Then the corresponding kernel $Q^{i+1}_{n-k+1-a_{k_{i+1}}}$
will be balanced and so the vertical normal bundle of $C_1$, i.e. $S^{(i+1)*}\otimes Q^{i+1}$,
will be balanced of slope
\[s_{i+1}=e_{i+1}/(k_{i+1}-k_i)+(e_{i+1}+e_i+...+e_0)/(n-k-a_{k_{i+1}})\]
(slope of tensor product=sum of slopes).
\par
\subsubsection{Lifting to $\Sigma^+$}
Now  the normal bundle $N=N_{\tilde\Sigma_*(a.)\to\gG}$ as described in \S \ref{normal-sec}
is filtered with quotients $S^{i*}\otimes G_i$ of degree $e_{i+1}+...+e_u+a_ie_i$
which are balanced of slope
\[g_i=e_i/(k_i-k_{i-1})+(e_{i+1}+...+e_u
)/((k_{i}-k_{i-1})a_i).\]
A general lifting of $C_r$ to $C_+\to\tilde\Sigma^+=\P(N\oplus\O)$ amounts to a general surjection over $C_s$,
$N^*\oplus\O\to\O(e_+)$. Now if $N$ is balanced then the kernel is always balanced.
In general,  Corollary \ref{kernel} yields that the kernel will be balanced provided 
\eqspl{e+}{e_+\geq \deg(N)-(\rk(N)-1)\min(g_i)=(\sum\limits_{i=0}^uie_{i}+a_ie_i-(A-1)\min(g_i).
}
Then the vertical normal bundle of $C_+$ will be balanced of slope
\eqspl{s+}
{s_+=\deg(N)/A+e_+(A+1)/A=(\sum\limits_{i=0}^uie_{i}+a_ie_i+(A+1)e_+)/A.
}

Finally $C_+\to\P(N\oplus\O)$ will be balanced provided
\eqspl{final-slope}{[s_0]=...=[s_u]=[s_+].}
In case $\Sigma_0$ is a point, $[s_0]$ is undefined and omitted from te above equation.
Note that in that case $S^0_{k_0+1}$ is trivial and $Q^0_{n-k}$ has degree $e_0+...+e_u$
(where $e_0=0$).
Note that by generality $C_+$ is disjoint from the exceptional locus of $\tilde\Sigma*(a.)\to\Sigma(a.)$
so the normal bundle to $C_+$ in $\tilde\Sigma*(a.)$ coincides with that in 
$\Sigma^+(a.)=\P(N_{\Sigma(a.)/\gG}\oplus\O$.

\begin{example}
	The results of \S\S \ref{odd-sec}, \ref{4n_1+2} for $k=1$ can be recovered
	by using a suitable Schubert (=strict Schubert)
	flag resolution as above:\par
	For $n=2n_0+1$ odd (see \S \ref{odd-sec}), we have $R=(n-3)/2, A=(n-1)/2=n_0$.
We can use $(a)=(a_0)=(n_0)$.\par
	
	For $n=4n_1+2$ (see \S \ref{4n_1+2}), we have $A=R+1=6n_1+1$
	We can use  $a=(3n_1+1, 3n_1)$.
	
\par
	For $n=4n_1$ the  (see \S \ref{4n_1}) we must use a strict flag resolution. We have 
	$R=3n/2-3, A=R+1=6n_1-2$ and we can use $(a.)=(n_1+1)$. We obtain the same results as before
	with $\Sigma_*(a.)$.
	An alternative choice for $(a.)$
	here is $a_0=4n_1-3, a_1=2n_1+1$. Then $e_0$ is arbitrary subject
	only to $2e_0\not\equiv n_1\mod n-2$ and $e_0$ then determines $e_1$.
	\par
	\end{example}
/***********
*************/
\subsection{Balanced scrolls}
Now to construct a balanced curve in $\gG$ we use two dual strategies.
Both strategies use the standard degeneration
of $\gG$ as in \S \ref{super}, blowing up  $\gG\times\A^1$ in $\Sigma\times 0$,
which yields a degeneration of $\gG\times 1$
to the inverse image of $\gG\times 0$ which is
\[X_0=\fF_1\cup_E\fF_2\] where $\fF_1=B_\Sigma\gG$, $\fF_2=\P(N\oplus\O)=\Sigma^+,
N=N_{\Sigma\to\gG}$ and $E=\P(N)$.  The two strategies differ in the construction of
a good connected nodal curve
$C_1\cup C_2\subset X_0$.\par
The first strategy, which might be called 'perfect plus balanced' (p+b) is used in Theorem \ref{any k}. 
It proceeds just as in the case $k=1$ (see \S 
\ref{degeneration}). We choose $A=R(k,n)+1, \Sigma=\Sigma(a.)$ with 
$\sum a_i=\sum (k_i-k_{i-1})a_{k_i}=A$, and 
$C_1=\bigcup C_{1,i}$ with $C_{1,i}$ as in Corollary \ref{remainder} the birational
transform of a minimal curve $C_{k,n}$, hence perfect.
For  $C_2\subset \fF_2$ we take a balanced curve,
corresponding to a balanced curve  $C_+\subset \tilde\Sigma^+(a.)$ (they have the same
normal bundle),
with $C_2\cap E=C_1\cap E$. Then $C_1\cup C_2$ will smooth out to a balanced curve in
$\gG$ of degree 
\[e=\deg(C_1)+\deg(C_2)-\deg(C_i\cap E).\]\par
Using the notation of \S \ref{c+}, we have
\eqspl{final-e}{
		e=e_0+...+e_u+(n-k-1)e_+
}
The second strategy, which might be called 'balanced plus perfect' (b+p)is used in Theorem \ref{mirror}.
It is a 'mirror' of the first, making $C_1\subset\fF_1$ perfect
and irreducible with $e_+=1$ and $C_2$ just balanced of degree $d$, the proper transform
of a balanced curve $C_2'$ in $\gG$ (initially, $d=n-k$ so $C_2'$ is minimal). This
yields a balanced curve of degree $e=d+\sum\limits_{i=0}^ue_i$. Repeating the process,
we get balanced curves of degree
\[e=n-k+r\sum\limits_{i=0}^ue_i, r\geq 0.\]\par 
\begin{thm}[p+b]\label{any k}
	Notations as above with $2k<n$, assume \eqref{final-slope} holds and either\par
	(i) $k_0=0$, or\par
	(i)' $\gG(k_0, n-k-a_{k_0}+k_0)$ contains a balanced curve of degree $e_0$\par
	and either\par
	(ii) \eqref{e+} holds, or\par
	(ii)' $u=0$.\par
	Then $\gG(k,n)$ contains a balanced rational curve of degree
	$e=e_0+...+e_u+(n-k-1)e_+$.
	\end{thm}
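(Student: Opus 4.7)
The plan is to execute the ``perfect plus balanced'' degeneration scheme already sketched in \S\ref{degeneration}, now with the more general Schubert cycle $\Sigma=\Sigma(a.)$ of codimension $A=\sum(k_i-k_{i-1})a_{k_i}$ from \S\ref{schubert}. First I set up the standard degeneration
\[\mathcal G=B_{\Sigma\times 0}(\gG\times\A^1)\to \A^1\]
of \S\ref{super}, so that the generic fibre is $\gG$ and the special fibre is $\fF_1\cup_E\fF_2$ with $\fF_1=B_\Sigma\gG$ and $\fF_2=\P(N\oplus\O)$, where $N=N_{\Sigma/\gG}$. The goal is to build a genus-$0$ nodal curve $C_1\cup C_2\subset\fF_1\cup_E\fF_2$, meeting transversely at a set of points of $E$, in which $C_1$ is a perfect disjoint union of translates of a minimal curve and $C_2\subset\fF_2$ is balanced. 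A general such configuration will smooth inside $\mathcal G$ to a rational curve in $\gG$ of the claimed degree
\[e=e_0+\ldots+e_u+(n-k-1)e_+,\]
and the balanced extensions principle from the preliminaries will force the smoothing to be balanced.

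For $C_2\subset\fF_2$ I run the construction of \S\ref{c+} inside the small resolution $\tilde\Sigma^+(a.)=\P(N_{\tilde\Sigma(a.)/\gG}\oplus\O)$. I start with a rational curve $C_0\subset\Sigma_0$ of degree $e_0$ which is balanced by hypothesis (i) or (i)'; iteratively lift to curves $C_i\subset\Sigma_i$ via general surjections $F_i^*|_{C_{i-1}}\to S^{i*}_{k_i-k_{i-1}}$, applying Lemma \ref{balancing-lem} and Corollary \ref{kernel} to get balanced vertical normal bundles with the stated slopes $s_i$; finally lift to $C_+\to\tilde\Sigma^+(a.)$ via a general surjection $N^*\oplus\O\to\O(e_+)$, which is balanced thanks to (ii) or the trivial case (ii)'. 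Equation \eqref{final-slope} is exactly the slope-matching condition that guarantees, via the balanced-extensions lemma, that the full normal bundle $N_{C_+/\tilde\Sigma^+(a.)}$ is balanced. By generality $C_+$ avoids the exceptional locus of $\tilde\Sigma^+(a.)\to\Sigma^+(a.)=\fF_2$, so $C_2:=$ image of $C_+$ has balanced normal bundle in $\fF_2$.

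For $C_1\subset\fF_1$ I choose $e_+$ general translates $C_{1,1},\ldots,C_{1,e_+}$ of the birational transform in $\fF_1=B_\Sigma\gG$ of a minimal scroll curve $C_{k,n}\subset\gG$ meeting $\Sigma$ as in Corollary \ref{remainder}; by that Corollary each $C_{1,j}$ is perfect in $\fF_1$ of slope equal to the minimal-scroll slope. I arrange that the $e_+$ intersection points $C_{1,j}\cap E$ coincide with the $e_+$ intersection points $C_2\cap E$ (possible because $C_2\cap E$ moves in a sufficiently large family on $E$, and each $C_{1,j}$ may be chosen to pass through any preassigned point of $E$). The resulting $C_1\cup C_2$ is a connected nodal rational curve in $\fF_1\cup_E\fF_2$ of arithmetic genus $0$.

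The main obstacle, and the only real content beyond bookkeeping, is verifying that the deformed curve is balanced in $\gG$. For this I use the standard short exact sequence relating the normal bundle of $C_1\cup C_2$ in $X_0=\fF_1\cup_E\fF_2$ to the normal bundles of the two components, and the fact that $E\subset X_0$ is Cartier of the appropriate type in both components (with normal bundles on $E$ that are negatives of each other), so that the logarithmic-normal sheaf of $C_1\cup C_2$ in $\mathcal G$ is an extension of the individual normal sheaves with twists accounted for by the nodes. Because each $C_{1,j}$ is perfect and $C_2$ is balanced with matching slope $[s_+]$ equal to that of the perfect pieces (this is precisely what \eqref{final-slope} ensures, since the slope of a minimal scroll curve in $\gG$ is $q=\lceil \deg N_{C_{k,n}}/\rk N_{C_{k,n}}\rceil$ and the matching of slopes propagates along $E$), the balanced-extensions lemma applies to show that the restricted normal sheaf on each component is balanced, and the node smoothings produce a balanced bundle on the generic fibre. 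A standard deformation-theoretic argument (unobstructedness of nodal smoothing when $H^1$ of the logarithmic normal bundle vanishes, which follows from balancedness and the positivity of all the slopes involved) then produces a rational curve in $\gG$ of degree $e$ as above, which is automatically balanced. Computing $\deg(C_1\cup C_2)$ in $\mathcal G$ via $e_+$ copies of $C_{k,n}$ (degree $n-k$ each, minus $1$ for each point on $E$) plus $\deg(C_2)=e_0+\ldots+e_u+e_+$ gives the stated formula \eqref{final-e}.
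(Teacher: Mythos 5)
Your overall architecture is the same as the paper's: the standard degeneration of \S\ref{super}, the curve $C_+$ built by the iterated lifting of \S\ref{c+} (with (i)/(i)' supplying a balanced $C_0$ and (ii)/(ii)' supplying a balanced kernel for the final lift to $\P(N\oplus\O)$), and $e_+$ perfect translates from Corollary \ref{remainder} attached along $E$. That is exactly the ``p+b'' strategy the paper uses, so on the level of construction there is nothing missing.

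However, your justification of the final smoothing step contains a genuine misstatement. You write that the smoothing is balanced because ``$C_2$ is balanced with matching slope $[s_+]$ equal to that of the perfect pieces (this is precisely what \eqref{final-slope} ensures).'' It is not: \eqref{final-slope} is the \emph{internal} matching condition $[s_0]=\dots=[s_u]=[s_+]$ among the graded pieces of the normal bundle of $C_+$ inside $\fF_2$, needed so that $N_{C_+/\fF_2}$ itself is balanced via the balanced-extensions principle. It says nothing about the slope of the minimal-curve transforms $C_{1,j}$ in $\fF_1$, and in general that slope will \emph{not} equal $[s_+]$. The whole point of the ``perfect plus balanced'' strategy is that no such matching across $E$ is required: because each tail $C_{1,j}$ is perfect (its normal bundle is a twist of a trivial bundle), the smoothing result for bundles on broken combs (\cite{caudatenormal}; this is also why Corollary \ref{remainder} insists on perfection rather than mere balancedness) shows that attaching such a tail acts on $N|_{C_2}$ as a general elementary modification, and a general modification of a balanced bundle is balanced by Lemma \ref{balancing-lem}. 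If slope matching across the nodes were actually needed, the theorem would require an additional arithmetic hypothesis that it does not have. Separately, your degree count double-charges $e_+$: with your convention $\deg(C_2)=e_0+\dots+e_u+e_+$ and the subtraction of one per node already absorbed into the $C_1$ side, you get $e_0+\dots+e_u+(n-k)e_+$ rather than \eqref{final-e}; the correct bookkeeping takes $\deg(C_2)=e_0+\dots+e_u$ and $e=\deg(C_2)+e_+(\deg C_{k,n}-1)$.
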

\begin{example}[Example \ref{r=0} cont'd] Let $n+1=q(k+1)$. Then Equation \eqref{s+}
	becomes, with $m:=n-k-q+1$:
	\eqspl{c+-balanced}{
		e_0+e_++[e_+/(k+q)]=[(2ke_1-e_0)/n-k-1)]=[e_0(m+1)/(m-1)].
	}
Now assume $(2k, q-1)=1$, hence $(2k, n-k-1)=1$ and let $j2k\equiv 1 \mod n-k-1$. 
Then note that by choosing $e_0\equiv 0\mod m-1, 
e_1\equiv je_0\mod n-k-1 $ we get an arithmetic progression of values of 
$e=e_0+e_1+e_+(n-k-1)$. For example, for $k=2$ the condition $(2k, q-1)=1$
means $n+1$ is divisible by 6.

\end{example}
\begin{example}[Example \ref{alphabeta-example} cont'd]\label{ab+}
	Assume \eqref{alphabeta} holds. It is then convenient to avoid the general formalism
	of this section and just start with $C_0\subset\Sigma_0=\gG(k-\alpha, n-\alpha-\beta)$
	balanced of degree $e_0$ and slope $s_0$ 
	and try to lift it to $C_+$. We need $N_{C_+}$ to be balanced for a general lifting, which is automatic
	if either $\alpha=0$ or $\beta=0$.  If
	$\alpha, \beta>0$ then $N_{C_+}$ for a genera lifting will be balanced provided
	\eqref{e+} holds, which here reads 
	reads
	\eqspl{e+alpha beta}
	{e_+\geq\alpha e_0(\frac{n-k}{k+1-\alpha}-1).
	}

Then the slope matching condition becomes
	 \eqspl{s+matching}{[s_0]=[s_+].} 
Note that here we have
\[s_+=e_++\frac{e_++e_0(\alpha+\beta)}{\alpha(n-k)+\beta(k+1-\alpha)}, 
s_0=\frac{e_0(n+1-\alpha-\beta)-2}{(k+1-\alpha)(n-k-\beta)-1}.\]
Case (i): assume $\alpha=0, \beta<<k$.\par  Then solving \eqref{s+matching} for $e_+/e_0$
yields asymptotically
\[\frac{n+1-\beta}{(k+1)(n-k-\beta)}-1/k \sim n/k(n-k)-1/k \sim 1.\]
Since this is positive, \eqref{s+matching} has infinitely many solutions $e_0, e_+$
for large, fixed $n, k$.\par
Case (ii): assume $\beta=0, \alpha<<k$.\par Again \eqref{s+matching} yields asymptotically
\[e_+/e_0 \sim \frac{n+1-\alpha}{(k+1)(n-k)}-1/(n-k) \sim \frac{n-k-\alpha}{(k+1)((n-k)}>0.\]
hence again there are infinitely many solutions.
\par
Case (iii) assume $q=2, \alpha=1, \beta=n-k-\gamma, \gamma<<k$ and also
$r/k\leq c<1$ as $k$ gets large, for a constant $c$.\par
 Thus $n=2k+1+r$.
Then \eqref{e+alpha beta} says 
\eqspl{case3}{e_+/e_0\geq (r+1)/k}
where $(r+1)/k\leq c+1/k<1$,
 while \eqref{s+matching} says asymptotically
\eqspl{case3'}
{e_+/e_0 \sim (k+\gamma)/k\gamma - \frac{n-k+1-\gamma}{(n-k)(k+1)-\gamma(k-1)}\sim 1-1/k>>(r+1)/k}
(the last inequality holds as soon as $k>2/(1-c)$). So \eqref{case3} follows asymptotically from 
\eqref{case3'}.
Thus we get infinitely many solutions in this case as well.

	\end{example}
We now turn to  the mirror b+p strategy to that of Theorem \ref{any k}, as mentioned above.
Here we make $C_+$ and $C_2$ perfect
with $e_+=1$.
Then attach $\bigcup\limits_{i=1}^mC_{2, i}$ to 
a balanced, not necessarily perfect curve $C_1\subset\fF_1$  with $C_1.E=m$. 
In case $m=1$, we can take for $C_1$ the birational transform of
a balanced curve $C'_1$ in $\gG$ (e.g. a minimal curve) meeting the Schubert cycle $\Sigma$ 
transversely in 1 point. When the codimension of $\Sigma$ does not match the upper rank of $C'_1$,
i.e. \eqref{alphabeta} fails, 
the proper transform $C_1$ will still be balanced though not perfect (see \cite{caudatenormal}, Lemma 4).
This yields the following.
\begin{thm}[b+p]\label{mirror}
	Hypotheses as in Theorem \ref{any k}, assume moreover that 
	\eqspl{integer}{s_0,...,s_u\in\Z,
	e_+=1,} and that
	$C_1\subset \fF_1$ is balanced of degree $d$ with $C_1.E=m$. Then $\gG(k,n)$ contains a balanced
	rational curve of degree $e=d+m(e_0+...+e_u)$.
	\end{thm}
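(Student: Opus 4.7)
The plan is to mirror the construction of Theorem \ref{any k}, swapping the roles of the perfect and balanced components. I would use the standard degeneration $\pi : \mathcal{G} = B_{\Sigma \times 0}(\gG \times \mathbb{A}^1) \to \mathbb{A}^1$ of \S \ref{super}, whose special fibre is $\fF_1 \cup_E \fF_2$ with $\fF_2$ small-resolved by $\tilde\Sigma^+(a.) = \mathbb{P}(N_{\tilde\Sigma(a.) \to \gG} \oplus \mathcal{O})$. The idea is to replace the perfect-plus-balanced decomposition used for Theorem \ref{any k} by a balanced-plus-perfect one: keep the given balanced $C_1 \subset \fF_1$ on the $\fF_1$ side, and build on the $\fF_2$ side a union of $m$ perfect curves to attach to $C_1$ at its $m$ intersection points with $E$.

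First, I would apply the inductive lifting construction of \S \ref{c+} with $e_+ = 1$ to produce a rational curve $C_+ \subset \tilde\Sigma^+(a.)$. The added integrality hypothesis $s_0, \ldots, s_u \in \mathbb{Z}$ upgrades ``balanced'' to ``perfectly balanced'' at each stage of that construction, and \eqref{final-slope} then gives that the final normal bundle $N_{C_+ / \tilde\Sigma^+}$ is itself perfectly balanced. Since $e_+ = 1$, a general such $C_+$ meets $E$ transversely at one general point. I would then take $m$ general translates $C_{+,1}, \ldots, C_{+,m}$ of $C_+$ whose intersection points with $E$ coincide with those of $C_1 \cap E$; the resulting
\[ D = C_1 \cup C_{+,1} \cup \ldots \cup C_{+,m} \subset \fF_1 \cup_E \fF_2 \]
is a connected nodal rational curve with $m$ nodes, all on $E$.

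The remaining task is to smooth $D$ inside $\mathcal{G}$ and verify that the smoothing is balanced. Smoothability at each node follows from transversality of the two branches to $E$, automatic from the construction. For balancedness, I would analyze $N_{D/\mathcal{G}}$ componentwise: on $C_1$ it is balanced by hypothesis; on each $C_{+,i}$ it sits in an extension accounting for the direction normal to $\fF_2$ in $\mathcal{G}$, and the balanced-extensions principle of \S 1 shows it is balanced of integer slope. Because each $C_{+,i}$ is perfect, the matching condition at every node is automatically satisfied, so the glued bundle on $D$ is balanced. Openness of balancedness in flat families of bundles on $\mathbb{P}^1$ then transmits this to the smoothed curve in $\gG$. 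The degree count is immediate from $\deg_H(C_t) = \deg_H(D)$ with $H$ the pullback of $\mathcal{O}_\gG(1)$: $C_1$ contributes $d$, and each $C_{+,i}$ contributes the degree $e_0 + \ldots + e_u$ of its image in $\Sigma \subset \gG$ (the nodes map to points and contribute $0$), giving $e = d + m(e_0 + \ldots + e_u)$ as claimed.

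The main obstacle will be the node-matching normal-bundle analysis. Concretely, one must compare the restrictions of $N_{C_1/\mathcal{G}}$ and $N_{C_{+,i}/\mathcal{G}}$ at each node and check that perfectness of $C_{+,i}$ eliminates any potential slope obstruction, so that the glued bundle on $D$ is genuinely balanced and the deformation smoothing the nodes is transverse to the boundary of the family. I expect this step to parallel the corresponding analysis in Theorem \ref{any k}, with the two strategies being ``mirror'' precisely in that each handles the matching via one perfect side and one balanced side.
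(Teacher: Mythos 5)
Your proposal is correct and takes essentially the same route as the paper: the paper's own justification of Theorem~\ref{mirror} is exactly the balanced-plus-perfect construction you describe — use the standard degeneration of \S\ref{super}, make $C_+$ perfect via the integrality of the $s_i$ together with $e_+=1$, attach $m$ general translates of $C_+$ to the balanced $C_1\subset\fF_1$ at the points of $C_1\cap E$, and smooth the resulting nodal curve, the perfectness of the attached components removing any gluing/matching obstruction. The one point left implicit in both your argument and the paper's is that perfectness of $C_+$ also requires $s_+$ to be an integer equal to the $s_i$ (not just $[s_+]=[s_0]$), which holds in the paper's applications such as Corollary~\ref{oddG}.
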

\begin{cor}\label{oddG}
	$\gG(k, 2k+1), k\geq 1$ contains balanced rational curves of degree $rk+1$ for all $r\geq 1$. 
	\end{cor}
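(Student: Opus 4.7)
I would argue by induction on $r \geq 1$. The base case $r = 1$ is handled by the minimal rational curve $C_{k,2k+1}$, which has degree $k + 1 = 1 \cdot k + 1$ and perfectly balanced normal bundle of slope $2$, by the Proposition on minimal scrolls in the borderline case $n=2k+1$.

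For the inductive step, suppose a balanced rational curve $C'$ of degree $(r-1)k + 1$ exists in $\gG(k,2k+1)$, with corresponding scroll $X' \subset \P^{2k+1}$. I would apply Theorem \ref{mirror} to the Schubert cycle
\[ \Sigma = \Sigma(k+1, 1, \ldots, 1) = \{L \in \gG : P_0 \in L \subset H\} \cong \gG(k-1, 2k-1), \]
where $P_0 \in \P^{2k+1}$ and $H \cong \P^{2k}$ is a hyperplane through $P_0$. This is the $(\alpha, \beta) = (1, 1)$ case of Example \ref{alphabeta-example}; it is smooth of codimension $A = 2k+1$ and coincides with its strict flag resolution $\tilde\Sigma$. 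In the notation of \S\ref{c+}, $u = 1$, $k_0 = 0$, $k_1 = k$, $\Sigma_0$ is a single point, and $\Sigma_1 = \Sigma$. I would take parameters $(e_0, e_1, e_+) = (0, k, 1)$ and choose the underlying rational curve in $\Sigma_1$ to be a minimal curve in the borderline Grassmannian $\gG(k-1, 2k-1)$, which has degree $k$ and perfectly balanced normal bundle of slope $s_1 = 2$; the divisibility condition $k \mid e_1$ holds.

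The key computation is that $N = N_{\Sigma/\gG}|_{C_+}$ is balanced. From Example \ref{alphabeta-example}, its graded pieces are $\alpha Q_{n-k-\beta}|_{C_1} = Q_k|_{C_1} \cong k\O(1)$, $\alpha\beta\cdot\O = \O$, and $\beta S^*_{k+1-\alpha}|_{C_1} = S^*_k|_{C_1} \cong k\O(1)$, all twists of $\O$ by $\{0,1\}$; hence all the relevant $\Ext^1_{\P^1}$ groups vanish and the extensions split, giving $N \cong 2k\O(1) \oplus \O$. Consequently the kernel of a general surjection $N^* \oplus \O \cong 2k\O(-1) \oplus 2\O \to \O(1)$ is forced by a rank-degree count to be $(2k+1)\O(-1)$, perfectly balanced; the vertical normal bundle of $C_+$ is therefore $(2k+1)\O(2)$, of slope $s_+ = 2 = s_1$. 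Hence the integrality and slope-matching conditions \eqref{integer}, \eqref{final-slope} hold, while the balancedness of $N$ itself removes the need to verify condition \eqref{e+}.

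Finally I would arrange $m = C'\cdot\Sigma = 1$ by choosing $P_0$ to be a smooth point of $X'$ through which passes a unique ruling $L_0$, and taking $H$ to be any hyperplane containing $L_0$; then $C' \cap \Sigma = \{L_0\}$ transversely. Theorem \ref{mirror} then produces a balanced rational curve of degree $d + m(e_0 + e_1) = ((r-1)k+1) + k = rk+1$, and the induction continues since the new curve again corresponds to a scroll for which such a choice of $P_0$ and $H$ is available. The main obstacle is the verification that $N$ splits as $2k\O(1) \oplus \O$ on $\P^1$, as this is what permits $e_+ = 1$ rather than the larger value otherwise forced by \eqref{e+}, and in turn is what makes the recursion increment by exactly $k$ at each stage.
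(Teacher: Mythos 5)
Your proposal follows essentially the same route as the paper's own proof: the same Schubert cycle $\Sigma=\gG(k-1,2k-1)$ (the $\alpha=\beta=1$ case), the same identification $N|_{C_0}=2k\O(1)\oplus\O$ with kernel $(2k+1)\O(-1)$ giving a perfect $C_+$ of slope $s_+=2$, and the same application of Theorem \ref{mirror} with $m=1$; the paper's ``repeating the process'' is exactly the induction on $r$ that you make explicit. The extra details you supply (the $\Ext^1$ vanishing forcing $N$ to split, and the choice of $P_0$ and $H$ ensuring $C'\cdot\Sigma=1$) are consistent with, and slightly amplify, the paper's terser argument.
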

\begin{proof}
	Working as in Example \ref{ab+}, take $n=2k+1, \alpha=\beta=1, 
	\Sigma=\gG(k-1, 2k-1), e_0=k$. Thus $C_0\subset \gG(k-1, 2k-1)$
	is minimal hence balanced. 
	In this case it is easy to check directly that $N|_{C_0}=2k\O(1)+\O$
	which is balanced, so the kernel of a general map $N^*|_{C_0}\oplus\O\to\O(1)$ is just
	$n\O(-1)$. Thus $s_+=s_0=2$ so $C_+$ is perfect.
	For $C_1$ we take the birational transform of a minimal curve
	in $\gG$ and $m=1$.
	Then the result follows from the Theorem.
	\end{proof}
For example, in $\P^3$ (resp. $\P^5$), there are balanced surface (resp. 3-fold) scrolls
of every degree (resp. every odd degree) $>1$, etc.
\begin{example}
	Let \[\Sigma=\Sigma((\beta-1)^{(k)})=\{ L: \dim(L\cap\P^{n-\beta})\geq k-1\}\subset \gG(k,n).\]
	Thus, 
	\[\Sigma_0=\gG(k-1, n-\beta), \Sigma_1=\P_{\Sigma_0}(Q_{n-k-\beta+1}\oplus\beta\O),
	\Sigma^+=\P_{\Sigma_1}(N\oplus\O).\]
	We have
	\[N=S_{\Sigma_0}^*\otimes Q_{\Sigma_1}\]
	where $Q_{\Sigma_1}$ is locally free on the big open set where $\dim(L\cap\P^{n-\beta})=k-1$,
	and there fits in an exact sequence
	\[\exseq{Q^*_{\Sigma_1}}{\beta\O}{\O_{\Sigma_1}(1)}.\]
	Working as in Theorem \ref{mirror}, we get
	\[s_0=\frac{e_0(n+1-\beta)-2}{k(n-\beta-k+1)-1}, s_1=\frac{e_0+e_1}{n-k}+e_1,
	s_+=\frac{e_0}{k}+\frac{e_1}{(\beta-1)}+\frac{1}{k(\beta-1)}+1.\]
	We must solve
	\[s_0=s_1=s_+\in\Z.\]
	A solution yields balanced rational curves in $\gG(k, n)$ of degree 
	\[e=n-k+r(e_0+e_1), r\geq 1.\]
	These equations seem to be difficult to solve. A couple of isolated solutions are:\par
	(i) $n=8, k=3, \beta=3, e_0=7, e_1=3$. This yields balanced curves in $\gG(3, 8)$ of degree
	$5+10r, r\geq 0$.\par
	(ii) $n=5, k=1, \beta=3, e_0=3, e_1=5$, yielding balanced curves
	in $\gG(1, 5)$ of degree $4+8r, r\geq 0$.
	\end{example}

/****************
***********/

\bibliographystyle{amsplain}
\bibliography{../mybib}

\end{document}